\theoremstyle{plain}
\newtheorem{theorem}{Theorem}[section]
\newtheorem{corollary}[theorem]{Corollary}
\newtheorem{lemma}[theorem]{Lemma}
\newtheorem{prop}[theorem]{Proposition}
\theoremstyle{definition}
\newtheorem{definition}[theorem]{Definition}
\newtheorem{example}[theorem]{Example}
\newtheorem{remark}[theorem]{Remark}
\title{Octonion algebras over schemes and the equivalence of isotopes and isometric quadratic forms\footnote{This work was done while the author was a master student at the Department of Mathematics, Uppsala University. Supervisor was S. Alsaody.}}
\author{V. Hildebrandsson\footnote{Department of Mathematics (MAI), Linköpings University, 581 83 Linköping, Sweden. Email address: victor.hildebrandsson@gmail.com}}
\date{}
\begin{document}
\maketitle

\begin{abstract}
    Octonion algebras are certain algebras with a multiplicative quadratic form. In \cite{OA}, Alsaody and Gille show that, for octonion algebras over unital commutative rings, there is an equivalence between isotopes and isometric quadratic forms. The contravariant equivalence from unital commutative rings to affine schemes leads us to a question: can the equivalence of isometry and isotopy be generalized to octonion algebras over a (not necessarily affine) scheme? We present the basic definitions and properties of octonion algebras, both over rings and over schemes. Then we show that an isotope of an octonion algebra $\mathcal{C}$ over a scheme is isomorphic to a twist by an $\mathbf{Aut}(\mathcal{C})$–torsor. We conclude the paper by giving an affirmative answer to our question.
\end{abstract}

\tableofcontents

\section{Introduction}
The octonions $\mathbb{O}$ are a normed division algebra over $\mathbb{R}$ of dimension 8, discovered in 1843 by John Graves. Based on these, the definition of a octonion algebras over any field was given by Leonard Dickson in 1927, and this was generalized once more to octonion algebras over rings by Loos–Petersson–Racine in 2008 \cite{LPR}. An important part of these definitions is the assumption that the algebra has an associated quadratic form. In the case over a field, an octonion algebra is completely determined by its quadratic form \cite[Claim 2.3]{octalgfield}. Over rings however, this is not the case, proved by Gille in \cite{Gille_2014}. The main result of Alsaody and Gille gives a construction of the non–isomorphic octonion algebras with isometric quadratic forms. By the contravariant equivalence of the category of rings and the category of affine schemes, we expect this construction to generalize to octonion algebras over affine schemes. However, will the construction generalize to octonion algebras over general schemes?

Composition algebras over locally ringed spaces were first defined by Petersson in \cite{HP}. Since a locally ringed space $(X,\mathcal{O}_X)$ comes with a sheaf of rings $\mathcal{O}_X$, the idea is to define the composition algebra as a pair $(\mathscr{C},N)$ where $\mathscr{C}$ is an $\mathcal{O}_X$–algebra that is locally a composition algebra in the ring sense. Composition algebras only exist in ranks 1, 2, 4, 8, and a composition algebra of rank 8 is called an octonion algebra.

The structure of the article is as follows. In Section \ref{BG}, we recall background of sheaf theory over general sites, the fppf site, and basic notions of octonion algebras over rings. We also present the main result of Alsaody and Gille. In Section \ref{OoS}, we give the corresponding definitions for octonion algebras over schemes. In Section \ref{TII}, we prove several analogous results of those proved in \cite[§4 and §6]{OA}. The main results are (1) twisting an octonion algebra $\mathcal{C}$ over a scheme by an $\mathbf{Aut}(\mathcal{C})$–torsor corresponds naturally to an isotope of $\mathcal{C}$, and (2) two octonion algebras over schemes are isotopic if and only if their quadratic forms are isometric.

\subsection*{Acknowledgments}
I want to thank Susanne Pumplün for enlightening me of the work of Petersson. I also want to thank Philippe Gille for our discussion which helped improve the thesis. Lastly, I am grateful to Seidon Alsaody who introduced me to this problem and who helped me immensely, both throughout the master thesis project and when rewriting it into this article.

\section{Background}\label{BG}
In this section, we will present some background on sheaf theory on general sites and the site $(\mathbf{Sch}/X)_\text{fppf}$. We will assume knowledge in standard sheaf theory (i.e.\ over the Zariski site) and in basic schemes theory.  We will also present theory of octonion algebras over rings and the main result of Alsaody and Gille. For more on sheaf theory on general sites, see \cite[§7]{stacks}. For more on the fppf site, see \cite[§34.7]{stacks}. For more on octonion algebras over rings and proofs of results in Section \ref{OoR}, see \cite[§2 and §6]{OA}.

Throughout, if not stated otherwise, $R$ is a commutative and unital ring and $X$ is a scheme. By an \emph{$R$–ring} we mean a unital, commutative and associative $R$–algebra. We denote the category of schemes by $\mathbf{Sch}$. 

A \emph{$X$–scheme} is a scheme $Y$ together with a morphism of schemes $Y\to X$. A $X$–morphism $Y\to Z$ is a morphism of schemes such that the diagram
\[
\begin{tikzcd}
Y\arrow[rr]\arrow[dr]&&Z\arrow[dl]\\
&X
\end{tikzcd}
\]
commutes. We denote the category of $X$–schemes by $\mathbf{Sch}/X$. 

By the Yoneda lemma, we can identify a scheme $Y\in\mathbf{Sch}$ (or $X$–scheme $Y\in\mathbf{Sch}/X$) with its representable functor
\[
h_Y:\mathbf{Sch}\to\mathbf{Set}\quad(\text{or }h_Y:\mathbf{Sch}/X\to\mathbf{Set}).
\]
We use the notation $Y(T):=h_Y(T)$. 

A \emph{group scheme} (or $X$–group scheme) is a scheme (or $X$–scheme) $G$ and a factorization of its representable functor $h_G$ through the forgetful functor of groups to sets.

\subsection{Sites and sheaves}
In order to avoid set theoretical problems, we work over a fixed Grothendieck universe.

A \emph{site} is a category $\mathscr{C}$ and a class $\mathbf{Cov}(\mathscr{C})$ of families of morphisms with fixed target $\{U_i\to U\}_{i\in I}$, called coverings of $\mathscr{C}$, satisfying certain conditions, see \cite[Definition 7.6.2]{stacks}. We will denote the site by its category $\mathscr{C}$.

\begin{example}[Fppf site]
    An \emph{fppf covering} of $X$ is a family of morphisms with fixed target in $\mathbf{Sch}$, $\{p_i:X_i\to X\}_{i\in I}$, such that $X=\bigcup_{i\in I}p_i(X_i)$ and each $p_i$ is flat and of locally finite presentation. These fppf coverings  makes $\mathbf{Sch}$ into a site \cite[Lemma 34.7.3]{stacks}, denoted by $\mathbf{Sch}_\text{fppf}$. We make $\mathbf{Sch}/X$ into a site by defining 
    \[
    (\mathbf{Sch}/X)_\text{fppf}:=(\mathbf{Sch}_\text{fppf})/X.
    \]
    In other words, a covering of $Y\in\mathbf{Sch}/X$ is an fppf covering $\{Y_i\to Y\}_{i\in I}$, where $Y_i\in\mathbf{Sch}/X$, such that the diagram
    \[
    \begin{tikzcd}
        Y_i\arrow[rr]\arrow[dr]&&Y\arrow[dl]\\
        &X&
    \end{tikzcd}
    \]
    commutes for all $i\in I$.
\end{example}

Let $\mathscr{C}$ be a category. A \emph{presheaf} $\mathcal{F}$ on $\mathscr{C}$ is a contravariant functor $\mathcal{F}:\mathscr{C}\to\mathbf{Set}$. If $\mathscr{C}$ is a site, a presheaf $\mathcal{F}$ is a \emph{sheaf} if for every covering $\{U_i\to U\}_{i\in I}\in\mathbf{Cov}(\mathscr{C})$ the following holds:
\begin{enumerate}
    \item 
    if two sections $s,t\in\mathcal{F}(U)$ are such that $\forall i\in I$ $s|_{U_i}=t|_{U_i}$ then $s=t$,
    
    \item 
    if $\forall i\in I$ $s_i\in\mathcal{F}(U_i)$ are such that $\forall j\in I$ $s_i|_{U_i\times_UU_j}=s_j|_{U_i\times_UU_j}$, then there exists $s\in\mathcal{F}(U)$ such that $\forall i\in I$ $s|_{U_i}=s_i$.
\end{enumerate}
Let $\mathcal{F}$ and $\mathcal{G}$ be two presheaves. A \emph{morphism of presheaves} is a natural transformation $\eta:\mathcal{F}\to\mathcal{G}$. If $\mathcal{F}$ and $\mathcal{G}$ are sheaves, then $\eta$ is a \emph{morphism of sheaves}. The categories of presheaves and sheaves are denoted by $\mathbf{PSh}(\mathscr{C})$ and $\mathbf{Sh}(\mathscr{C})$ respectively.

Let $\mathcal{F}$ be a presheaf on the site $\mathscr{C}$. A sheafification functor $L:\mathbf{PSh}(\mathscr{C})\to\mathbf{Sh}(\mathscr{C})$ is a left adjoint to the inclusion $\mathbf{Sh}(\mathscr{C})\hookrightarrow\mathbf{PSh}(\mathscr{C})$. The \emph{sheafification of $\mathcal{F}$} is then $\mathcal{F}':=L(\mathcal{F})$ together with the canonical map $\mathcal{F}\to\mathcal{F}'$. For its construction, see \cite[§7.10]{stacks}.

Let $\mathscr{C}$ be a site and $\mathcal{G}$ be a sheaf of groups on $\mathscr{C}$. A \emph{$\mathcal{G}$–torsor} is a sheaf of sets $\mathcal{F}$ on $\mathscr{C}$ endowed with a $\mathcal{G}$–action $\mathcal{G}\times\mathcal{F}\to\mathcal{F}$ such that
\begin{enumerate}
    \item 
    whenever $\mathcal{F}(U)$ is non-empty the action $\mathcal{G}(U)\times\mathcal{F}(U)\to\mathcal{F}(U)$ is transitive,

    \item
    for every $U\in\mathscr{C}$ there exists a covering $\{U_i\to U\}_{i\in I}\in\mathbf{Cov}(\mathscr{C})$ such that $\mathcal{F}(U_i)$ is non-empty for all $i\in I$.
\end{enumerate}
A morphism of $\mathcal{G}$–torsors is a morphism of sheaves compatible with the $\mathcal{G}$–action.

\subsection{Octonion algebras over rings}\label{OoR}
Let $A=(A,\ast)$ be an algebra. It is \emph{alternative} if for each $a,b\in A$
\[
a(ab)=(aa)b\text{ and }(ba)a=b(aa).
\]
For each $a\in A$ we have the linear maps $L_a:x\mapsto a\ast x$ and $R_a:x\mapsto x\ast a$. If $A$ is alternative then $L_aR_a=R_aL_a$ for all $a\in A$, and we denote this map $B_a$.

Two $R$–algebras $(A,\ast_A)$ and $(B,\ast_B)$ are \emph{isotopic} if there exist invertible linear maps $f_i:A\to B$, $i=1,2,3$, such that
\[
f_1(x\ast_Ay)=f_2(x)\ast_Bf_3(y)
\]
for all $x,y\in A$.

\begin{remark}\label{anm3.2}
    If $A$ and $B$ are isotopic, then $A$ and $B$ are isomorphic as $R$–modules, but not necessarily as $R$–algebras. However, $f_1:A\to B$ is an isomorphism of algebras $A$ and $(B,\ast'_B)$, where
    \[
    x\ast'_By=f_2f_1^{-1}(x)\ast_Bf_3f_1^{-1}(y).
    \]
\end{remark}

An algebra $(B,\ast_B')$ is a \emph{principal isotope} of $(B,\ast_B)$ if there exist invertible linear maps $g,h:B\to B$ such that $x\ast'y=g(x)\ast h(y)$ for all $x,y\in B$. We denote $(B,\ast_B')=:B_{g,h}$. The algebras $B$ and $B_{g,h}$ are obviously isotopic. It follows from remark \ref{anm3.2} that algebras $A$ and $B$ are isotopic if and only if $A$ is isomorphic to a principal isotope of $B$.

Let $(A,\ast)$ be an $R$–algebra and $g,h:A\to A$ invertible maps. $A_{g,h}$ is unital if there exists $e\in A$ such that for all $x\in A$
\[
g(x)\ast h(e)=g(e)\ast h(x)=x.
\]
Equivalently,
\[
g^{-1}=R_{h(e)}\text{ and }h^{-1}=L_{g(e)}.
\label{eq1}\tag{$\ast$}
\]
If $A$ is a unital, alternative algebra, this implies that $g(e)$ and $h(e)$ are invertible elements (the inverse of an element is well defined in an alternative algebra). Let $a=g(e)^{-1}$ and $b=h(e)^{-1}$. Then (\ref{eq1}) is equivalent to 
\[
g=R_a \text{ and }h=L_b.
\]
Also, for any $a,b\in A^\ast$, $A_{R_a,L_b}$ is unital with unity $(ab)^{-1}$. Here $A^\ast$ denotes the invertible elements of $A$.

\begin{prop}[Alsaody and Gille, Proposition 2.1]
    Let $A$ be a unital alternative algebra over $R$ and let $A'$ be isotopic to $A$. Then $A'$ is unital if and only if $A'\simeq A_{R_a,L_b}$ for some $a,b\in A^\ast$.
\end{prop}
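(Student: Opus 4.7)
The proof has two directions, and much of the technical content has already been assembled in the paragraphs preceding the statement, so the plan is largely to package those pieces into the biconditional.

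For the ``if'' direction, I would start from the fact recorded just before the proposition: for any $a,b\in A^\ast$, the principal isotope $A_{R_a,L_b}$ is unital with unit $(ab)^{-1}$. If $A'\simeq A_{R_a,L_b}$ as $R$–algebras, then $A'$ inherits a unit via the isomorphism, so $A'$ is unital. This direction is essentially immediate.

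For the ``only if'' direction, I would combine Remark \ref{anm3.2} with the discussion of the unitality condition (\ref{eq1}). Since $A'$ is isotopic to $A$, Remark \ref{anm3.2} shows that $A'$ is isomorphic (as an $R$–algebra) to a principal isotope $A_{g,h}$ of $A$, for some invertible linear maps $g,h\colon A\to A$. The isomorphism transports the assumed unit of $A'$ to a unit $e\in A_{g,h}$. Unitality of $A_{g,h}$ with unit $e$ is precisely the condition (\ref{eq1}), namely $g^{-1}=R_{h(e)}$ and $h^{-1}=L_{g(e)}$; since $A$ is unital and alternative, the element $g(e)$ (resp.\ $h(e)$) is then forced to be invertible in $A$, and setting $a=g(e)^{-1}$ and $b=h(e)^{-1}$ gives $g=R_a$ and $h=L_b$ with $a,b\in A^\ast$, as noted in the paragraph preceding the proposition. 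Hence $A'\simeq A_{R_a,L_b}$.

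The only slightly delicate point is ensuring that the unit of $A'$ really does transport to a unit of the principal isotope $A_{g,h}$ produced by Remark \ref{anm3.2}, i.e.\ that the isomorphism in the remark is an isomorphism of $R$–algebras and not merely of $R$–modules. This is exactly what the remark asserts (with $f_1$ serving as the algebra isomorphism), so once one invokes it the rest is a direct application of the already-established equivalent formulation (\ref{eq1}) of unitality for principal isotopes. I do not expect any genuine obstacle beyond being careful about this bookkeeping.
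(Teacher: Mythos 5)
Your proposal is correct: the paper itself gives no proof of this proposition (it is quoted from Alsaody--Gille), but the two paragraphs preceding it develop exactly the ingredients you use, and your argument packages them in the intended way --- Remark \ref{anm3.2} to reduce to a principal isotope $A_{g,h}$, the equivalence of unitality with condition (\ref{eq1}), and the resulting invertibility of $g(e)$ and $h(e)$ to write $g=R_a$, $h=L_b$. No gaps; the one point you flag (that the isomorphism of Remark \ref{anm3.2} is an algebra isomorphism, so it transports the unit) is indeed the only bookkeeping step, and the remark does assert it.
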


\begin{remark}\label{anm3.5}
    \begin{enumerate}
        \item 
        We denote $A_{R_a,L_b}=:A^{a,b}$.
        
        \item
        If $\phi:R\to S$ is a ring morphism, we can pullback an $R$–algebra $A$ to an $S$–algebra $A_S:=A\otimes_RS$. For $a,b\in A^\ast$, we have $(A^{a,b})_S=(A_S)^{a_S,b_S}$. We denote this algebra by $A^{a,b}_S$.
    \end{enumerate}
\end{remark}

A quadratic form on an $R$–module $M$ is a map $q:M\to R$ such that
\[
q(rm)=r^2q(m),\quad r\in R, m\in M,
\]
and
\[
b_q(m,n):=q(m+n)-q(m)-q(n),\quad m,n\in M
\]
is bilinear. It is multiplicative if
\[
q(mn)=q(m)q(n), \quad m,n\in M,
\]
and regular if the morphism
\begin{align*}
    \alpha:M&\to\text{Hom}(M,R)\\
    m&\mapsto b_q(m,\_) 
\end{align*}
is an isomorphism. If $M$ is finitely generated and projective, the rank of $M$ at $p\in\mathbf{Spec}(R)$ is the rank of the free $R_p$–module $M_p$. It is a locally constant function on $\mathbf{Spec}(R)$.

Now, an \emph{octonion algebra over $R$} is an $R$–algebra whose underlying module is projective of constant rank 8, and which is endowed with a regular multiplicative quadratic form. It is unital and alternative.

\begin{example}
    Over any ring $R$, we have the \emph{Zorn algebra} 
    \[
    \mathbf{Zorn}(R)=
    \begin{pmatrix}
        R&R^3\\
        R^3&R
    \end{pmatrix}
    \]
    with multiplication defined by
    \[
    \begin{pmatrix}
        a&u\\
        u'&a'
    \end{pmatrix}
    \begin{pmatrix}
        b&v\\
        v'&b'
    \end{pmatrix}=
    \begin{pmatrix}
        ab+u^tv'&av+b'u-u'\times v'\\
        a'v'+bu'+u\times v&(u')^tv+a'b'
    \end{pmatrix}
    \]
    where $u\times v$ is the vector cross product and $u^tv$ is the scalar product. With the quadratic form given by the determinant
    \[
    \det\begin{pmatrix}
        a&u\\
        u'&a'
    \end{pmatrix}
    =aa'-u^tu',
    \]
    $\mathbf{Zorn}(R)$ becomes an $R$–octonion algebra.
\end{example}

\begin{remark}
    \begin{enumerate}
        \item 
        If $(C,q)$ is an octonion algebra, we have the equality $C^\ast=\{x\in C|q(x)\in R^\ast\}$.
        
        \item
        The trace of a quadratic form $q$ is the map $tr:M\to R, m\mapsto b_q(1,m)$. Then we define the involution map by $\kappa:x\mapsto\overline{x}:=tr(x)\cdot1_C-x$.
        
        \item
        For any octonion algebra $(C,q)$ over $R$, we have the \emph{Hamilton–Cayley equation} $q(c)1=c\overline{c}$, for any $c\in C$ \cite{HP}.
    \end{enumerate}
\end{remark}

For any octonion algebra $C$ over $R$ we have the octonion sphere $\mathbf{S}_C$, which is the $R$–scheme
\[
\mathbf{S}_C(S)=\{c\in C_S|q_{C_S}(c)=1\}.
\]
We can now state the result of Alsaody and Gille.
\begin{theorem}[Alsaody and Gille, Corollary 6.7]
    Let $C$ and $C'$ be octonion algebras over $R$. The quadratic forms $q_C$ and $q_{C'}$ are isometric if and only if there exist $a,b\in\mathbf{S}_C(R)$ such that $C'\simeq C^{a,b}$.
\end{theorem}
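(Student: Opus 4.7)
The theorem splits into two implications of very different depth.

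For the easy direction ($\Leftarrow$), suppose $C' \simeq C^{a,b}$ with $a,b \in \mathbf{S}_C(R)$, i.e.\ $q_C(a) = q_C(b) = 1$. I would first verify directly that $q_C$ is still a multiplicative quadratic form on the isotope $C^{a,b}$, since the isotope product is $x \ast' y = (xa)(by)$ (computed in $C$) and by multiplicativity of $q_C$ on $C$,
$$q_C(x \ast' y) = q_C(xa)\, q_C(by) = q_C(a)\, q_C(b)\, q_C(x)\, q_C(y) = q_C(x)\, q_C(y).$$
Regularity of the bilinear form and the rank of the underlying projective module are inherited from $C$, so $(C^{a,b}, q_C)$ is an octonion algebra. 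The norm of an octonion algebra is uniquely determined by its multiplication via the Hamilton–Cayley equation $q(x) \cdot 1 = x \overline{x}$, so $q_C$ must equal the intrinsic norm of $C^{a,b}$, and the given algebra isomorphism $C' \simeq C^{a,b}$ restricts to an isometry $q_{C'} \simeq q_C$.

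For the forward direction ($\Rightarrow$) my plan is to argue via torsors. Twisted forms of $C$ are classified by the pointed set $H^1_{\text{fppf}}(R, \mathbf{Aut}(C))$, and the natural map $\mathbf{Aut}(C) \hookrightarrow \mathbf{O}(q_C)$ sends such a class to the isometry class of its norm. Hence octonion algebras with norm isometric to $q_C$ correspond to the kernel of $H^1_{\text{fppf}}(R, \mathbf{Aut}(C)) \to H^1_{\text{fppf}}(R, \mathbf{O}(q_C))$. To parameterize this kernel by elements of the sphere, I would introduce the $R$-group scheme of related triples $(t_0, t_1, t_2) \in \mathbf{O}(q_C)^3$ satisfying $t_0(xy) = t_1(x)\, t_2(y)$; it contains $\mathbf{Aut}(C)$ diagonally, and admits a morphism from $\mathbf{S}_C \times \mathbf{S}_C$ sending $(a,b)$ to $(R_a L_b, R_a, L_b)$ (well-defined by multiplicativity of $q_C$). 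Embedding these into a short exact sequence of fppf sheaves and passing to the associated long exact sequence in cohomology should identify the desired kernel with pairs $(a,b) \in \mathbf{S}_C(R)^2$, with the connecting map sending $(a,b)$ to the class of $C^{a,b}$.

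The main obstacle is triality: making the above short exact sequence precise over a general base and identifying the connecting homomorphism with the concrete isotope construction $(a,b) \mapsto C^{a,b}$. Triality is the phenomenon peculiar to rank $8$ that powers this correspondence, and it is where multiplicativity of the norm is used most heavily. Once triality is in hand, Hilbert 90-type vanishing for the connected smooth groups entering the sequence closes the cohomological computation and yields the theorem.
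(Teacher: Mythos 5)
Your $\Leftarrow$ direction is fine: multiplicativity of $q_C$ together with $q_C(a)=q_C(b)=1$ shows $q_C$ is still the norm of $C^{a,b}$ (uniqueness via Hamilton--Cayley), so an algebra isomorphism $C'\simeq C^{a,b}$ is an isometry. Your $\Rightarrow$ direction has the right skeleton --- classify forms by $H^1_{\text{fppf}}(R,\mathbf{Aut}(C))$, compare the kernels into $H^1$ of $\mathbf{O}(q_C)$ and of the related-triple group $\mathbf{RT}(C)$, and parameterize the latter kernel by the sphere --- which is exactly the Alsaody--Gille strategy that this paper transports to schemes. But two steps as written would fail.

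First, the map $(a,b)\mapsto(R_aL_b,R_a,L_b)$ is not a morphism $\mathbf{S}_C^2\to\mathbf{RT}(C)$: the relation $R_aL_b(xy)=R_a(x)L_b(y)$ reads $(b(xy))a=(xa)(by)$, which already fails at $x=y=1$ (it would force $ba=ab$); no Moufang identity produces a related triple from two \emph{independent} unit-norm elements (only the diagonal case $(B_a,R_{\overline a},L_{\overline a})$ works, via the middle Moufang identity). More structurally, no global section of $\Pi:\mathbf{RT}(C)\to\mathbf{S}_C^2$ can exist: a section would put an $R$-point in every fiber $\Pi^{-1}(a,b)$, trivializing each $\mathbf{Aut}(C)$-torsor and forcing $C^{a,b}\simeq C$ for all $a,b$, contradicting Gille's examples of non-isomorphic isotopes with isometric norms. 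The correct structure is the opposite of a splitting: one proves $\mathbf{RT}(C)/\mathbf{Aut}(C)\simeq\mathbf{S}_C^2$ via $\Pi$, so that the characteristic map of this \emph{homogeneous space} sends $(a,b)$ to the class of the fiber $\Pi^{-1}(a,b)$, and one then needs the separate (nontrivial) theorem that twisting $C$ by $\Pi^{-1}(a,b)$ yields precisely $C^{a,b}$. Second, ``Hilbert 90-type vanishing'' does not close the computation: $H^1_{\text{fppf}}(R,\mathbf{SO}(q_C))$ and $H^1_{\text{fppf}}(R,\mathbf{RT}(C))$ are not trivial over a general ring (their nonvanishing is exactly what makes the problem interesting). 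What is actually needed is (a) the exact sequence of pointed sets for the quotient $\mathbf{S}_C^2=\mathbf{RT}(C)/\mathbf{Aut}(C)$, identifying $\ker\big(H^1(\mathbf{Aut}(C))\to H^1(\mathbf{RT}(C))\big)$ with the orbit set $\mathbf{RT}(C)(R)\backslash\mathbf{S}_C^2(R)$, and (b) the comparison of this kernel with $\ker\big(H^1(\mathbf{Aut}(C))\to H^1(\mathbf{O}(q_C))\big)$, which uses the triality isomorphism $\mathbf{RT}(C)/\boldsymbol{\mu}_2\simeq\mathbf{SO}(q_C)$, the Cartesian square relating $\Pi$ to $\Pi_+$, the transitivity of the two $\mathbf{SO}(q_C)(R)$-actions on $(\mathbf{S}_{q_C}/\boldsymbol{\mu}_2)(R)$ (proved via the $\boldsymbol{\mu}_2$-invariant map $a\mapsto B_a$, which is where a genuine section appears), and a reflection argument to pass from $\mathbf{SO}$ to $\mathbf{O}$. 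These are the actual content of the theorem and are absent from the proposal.
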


\section{Octonion algebras over schemes}\label{OoS}
In this section, we will present the definition of an octonion algebra over a scheme and some of its constructions. These definitions will be generalizations of the ones defined in subsection \ref{OoR}.

\begin{definition}[Octonion algebra over a scheme]
    An \emph{octonion algebra} over $X$ is a tuple $(\mathcal{C},\mathcal{Q})$ consisting of
    \begin{enumerate}
        \item 
        a sheaf $\mathcal{C}$ of $\mathcal{O}_X$-algebras such that for all open affine $U\subset X$, the underlying module of the $\mathcal{O}_{X}(U)$-algebra $\mathcal{C}(U)$ is projective with constant rank 8,

        \item
        a morphism of sheaves $\mathcal{Q}:\mathcal{C}\to\mathcal{O}_X$ such that for all open affine $U\subset X$, $\mathcal{Q}(U)$ is a multiplicative, regular, quadratic form on the $\mathcal{O}_X(U)$–algebra $\mathcal{C}(U)$. We call $\mathcal{Q}$ the \emph{quadratic form on $\mathcal{C}$}.
    \end{enumerate}
\end{definition}

\begin{remark}\label{anm4.2}
    In the case where $X=\mathbf{Spec}(R)$ is an affine scheme, $\mathcal{O}_X(X)=R$, so for an octonion algebra $\mathcal{C}$ over $X$, $\mathcal{C}(X)$ is an $R$-octonion algebra (as defined in subsection \ref{OoR}).
\end{remark}

Note that defining the stalks and induced stalk maps $(\mathcal{C}_x,\mathcal{Q}_x)$ to be octonion algebras over $\mathcal{O}_{X,x}$, as defined in \cite[§1.6 and §1.7]{HP}, is another natural way to define the octonion algebra over a scheme. In the following proposition and corollary, we see that the definitions are equivalent.

\begin{prop}\label{prop3.3}
    Let $\mathcal{Q}:\mathcal{C}\to\mathcal{O}_X$ be a natural transformation. Then, for some open affine set $U\subset X$, $\mathcal{Q}(U)$ is a regular, multiplicative, and quadratic form if and only if $\mathcal{Q}_x$ is a regular, multiplicative, and quadratic form for all $x\in U$.
\end{prop}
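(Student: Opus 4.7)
The plan is to reduce both directions to the standard local-to-global principle for finitely presented modules over $R:=\mathcal{O}_X(U)$. Write $U=\mathbf{Spec}(R)$, set $M:=\mathcal{C}(U)$, which by hypothesis is a projective $R$-module of constant rank $8$, hence in particular finitely presented, and $Q:=\mathcal{Q}(U):M\to R$. For each $x\in U$ corresponding to a prime $\mathfrak{p}\in\mathbf{Spec}(R)$, the quasi-coherent nature of $\mathcal{C}|_U$ (implicit in the octonion algebra hypotheses) identifies $\mathcal{C}_x=M_{\mathfrak{p}}$, $\mathcal{O}_{X,x}=R_{\mathfrak{p}}$, and $\mathcal{Q}_x$ with the localization $Q_{\mathfrak{p}}:M_{\mathfrak{p}}\to R_{\mathfrak{p}}$. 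Under this identification, each of the three conditions (quadratic, multiplicative, regular) should become local on $\mathbf{Spec}(R)$.

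For the forward direction, localization is exact and commutes with the relevant algebraic operations: the identity $Q(rm)=r^2Q(m)$ passes through to $Q_{\mathfrak{p}}$, as does the $R$-bilinearity of $b_Q$ (yielding $R_{\mathfrak{p}}$-bilinearity of $b_{Q_{\mathfrak{p}}}$), and multiplicativity is preserved since the algebra structure on $M_{\mathfrak{p}}$ is the localization of that on $M$. Regularity translates to $\alpha_Q:M\to\mathrm{Hom}_R(M,R)$ being an isomorphism; since $M$ is finitely presented, the canonical map $\mathrm{Hom}_R(M,R)_{\mathfrak{p}}\to\mathrm{Hom}_{R_{\mathfrak{p}}}(M_{\mathfrak{p}},R_{\mathfrak{p}})$ is an isomorphism, and through it $(\alpha_Q)_{\mathfrak{p}}$ is identified with $\alpha_{Q_{\mathfrak{p}}}$, so localization of an isomorphism remains one.

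For the reverse direction, the conditions that $Q$ be quadratic (homogeneity of degree two, bilinearity of $b_Q$) and multiplicative are each the vanishing of specific elements of $R$ (respectively, of specific $R$-bilinear maps $M\times M\to R$) built from values of $Q$. Any element of $R$, or more generally of any $R$-module, that vanishes in every localization $R_{\mathfrak{p}}$ is zero, so the stalk hypotheses force these identities globally. For regularity, $\alpha_Q$ is an $R$-linear map between finitely presented $R$-modules, and such a map is an isomorphism if and only if its localization at every prime is; via the identification $(\alpha_Q)_{\mathfrak{p}}\cong\alpha_{Q_{\mathfrak{p}}}$ from the previous paragraph, the hypothesis on the stalks supplies exactly this.

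The main obstacle is really just a bookkeeping point: the identification $\mathcal{C}_x\cong M_{\mathfrak{p}}$ and the commutation of $\mathrm{Hom}_R(M,-)$ with localization. Both rely on the underlying module on the affine open $U$ being finitely presented, which is ensured by the definition of an octonion algebra. Once these standard facts are in hand, the equivalence is a direct translation between algebraic identities on $R$ and their counterparts on each $R_{\mathfrak{p}}$, with no further subtlety.
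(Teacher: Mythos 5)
Your argument is correct and rests on the same local--global principle as the paper's, but you implement it differently: the paper works directly with germs, defining $\mathcal{Q}_x$ via representatives $(U,s)$ and checking the identities on representatives (two sections of a sheaf with equal germs at every point are equal), whereas you pass through the identification $\mathcal{C}_x\cong M_{\mathfrak{p}}$, $\mathcal{Q}_x\cong Q_{\mathfrak{p}}$ and invoke the standard commutative-algebra facts that an element of an $R$-module vanishing in every localization is zero and that an $R$-linear map is an isomorphism if and only if all its localizations are. The trade-off is as follows. Your route is more precise exactly where the paper is vaguest: the regularity clause is the one place where finite presentation of $M$ genuinely matters (to identify $\mathrm{Hom}_R(M,R)_{\mathfrak{p}}$ with $\mathrm{Hom}_{R_{\mathfrak{p}}}(M_{\mathfrak{p}},R_{\mathfrak{p}})$ and hence $(\alpha_Q)_{\mathfrak{p}}$ with $\alpha_{Q_{\mathfrak{p}}}$), and you supply that, while the paper dismisses regularity with ``the proof is similar.'' The cost is that you must assume $\mathcal{C}|_U$ is quasi-coherent so that stalks are localizations; this is not literally part of the paper's definition of an octonion algebra over a scheme, though the paper itself uses the same identification implicitly in the corollary that follows (rank of $\mathcal{C}(U)$ at $x$ equals rank of $\mathcal{C}_x$, projectivity checked on stalks), so you are not assuming more than the paper does. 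Two minor points of hygiene: the defect $(m,n)\mapsto Q(mn)-Q(m)Q(n)$ is not a bilinear map, so phrase that step simply as the vanishing, for each fixed $(m,n)$, of an element of $R$ in every $R_{\mathfrak{p}}$; and note that for the ``isomorphism is local'' criterion no finiteness is needed beyond what you already used for the $\mathrm{Hom}$ identification, since injectivity and surjectivity of any $R$-linear map can be checked on localizations.
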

\begin{proof}
    We only show the multiplicative part, regularity and being quadratic is similar. The induced stalk map $\mathcal{Q}_x$ at $x\in U$ can be defined in two steps:
    \begin{align*}
        1.&\quad\Tilde{\mathcal{Q}}_x:(U,s)\mapsto(U,\mathcal{Q}(U)(s))\\
        2.&\quad \mathcal{Q}_x:\overline{(U,s)}\mapsto\overline{\Tilde{\mathcal{Q}}_x(U,s)}.
    \end{align*}
    Assume $\mathcal{Q}(U)$ is a regular, multiplicative, and quadratic form. Then, by the definition of $\mathcal{Q}_x$ for $x\in U$, it follows that $\mathcal{Q}_x$ is aswell.
    
    Now, assume for all $x\in U$ that $\mathcal{Q}_x$ is multiplicative. Then $\Tilde{\mathcal{Q}}_x$ must be multiplicative. For $a,b\in\mathcal{C}(U)$, we have
    \begin{align*}
        (U,\mathcal{Q}(U)(ab))&=\Tilde{\mathcal{Q}}_x((U,ab))\\
        &=\Tilde{\mathcal{Q}}_x((U,a)(U,b))\\
        &=\Tilde{\mathcal{Q}}_x((U,a))\Tilde{\mathcal{Q}}_x((U,b))\\
        &=(U,\mathcal{Q}(U)(a))(U,\mathcal{Q}(U)(b))\\
        &=(U,\mathcal{Q}(U)(a)\mathcal{Q}(U)(b))
    \end{align*}
    so $\mathcal{Q}(U)$ must also be multiplicative. The proof of regularity and being quadratic is similar.
\end{proof}

\begin{corollary}
    For some open affine set $U\subset X$, $(\mathcal{C}(U),\mathcal{Q}(U))$ is an octonion algebra over $\mathcal{O}_X(U)$ if and only if $(\mathcal{C}_x,\mathcal{Q}_x)$ is an octonion algebra over $\mathcal{O}_{X,x}$ for all $x\in U$.
\end{corollary}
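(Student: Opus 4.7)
The corollary is a routine consequence of Proposition~\ref{prop3.3} together with the local--global principle for finitely generated projective modules. I would proceed as follows. First, I would fix an open affine $U=\mathbf{Spec}(R)\subset X$ and note that for each $x\in U$ with corresponding prime $\mathfrak{p}_x\subset R$, one has $\mathcal{O}_{X,x}=R_{\mathfrak{p}_x}$ and, because $\mathcal{C}$ restricted to $U$ is naturally quasi-coherent, $\mathcal{C}_x=\mathcal{C}(U)_{\mathfrak{p}_x}$, with $\mathcal{Q}_x$ the localization of $\mathcal{Q}(U)$ at $\mathfrak{p}_x$. This reduces the corollary to a claim phrased entirely in terms of the $R$-module $M:=\mathcal{C}(U)$ together with the map $q:=\mathcal{Q}(U)\colon M\to R$.

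Proposition~\ref{prop3.3} directly supplies the equivalence between $q$ being regular, multiplicative, and quadratic and each $q_{\mathfrak{p}_x}$ being so. It therefore remains only to verify that $M$ is a projective $R$-module of constant rank $8$ if and only if $M_{\mathfrak{p}}$ is a free $R_{\mathfrak{p}}$-module of rank $8$ for every prime $\mathfrak{p}\subset R$ (equivalently, for every $x\in U$). The forward direction is immediate from the definition of ``projective of constant rank $8$''. For the converse, I would use the classical argument: for each $\mathfrak{p}$, lift a free basis of $M_{\mathfrak{p}}$ to a distinguished open neighborhood $D(f)\ni\mathfrak{p}$ on which $M_f$ is free of rank $8$, and invoke quasi-compactness of $\mathbf{Spec}(R)$ to extract a finite cover by such distinguished opens, so that $M$ is locally free of constant rank $8$, hence finitely generated projective of constant rank $8$.

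The main (mild) obstacle is at the setup level rather than in any deep content: one must confirm that, under the definition of an octonion algebra over $X$, the sheaf $\mathcal{C}$ restricted to an open affine $U$ is quasi-coherent, so that the stalks really are the localizations of $\mathcal{C}(U)$ at the corresponding primes. Once this identification is fixed, the corollary is a straightforward combination of Proposition~\ref{prop3.3} with the standard local--global criterion for projectivity of finite constant rank, and no further work is required.
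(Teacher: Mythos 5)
Your proposal is correct and follows essentially the same route as the paper: the paper likewise disposes of the quadratic-form conditions by Proposition~\ref{prop3.3} and handles the module conditions by the local--global criterion for projectivity and rank (which it cites rather than re-proves, noting that the rank of $\mathcal{C}(U)$ at $x$ is by definition the rank of $\mathcal{C}_x$). Your explicit flagging of the identification $\mathcal{C}_x\simeq\mathcal{C}(U)_{\mathfrak{p}_x}$ is a reasonable extra precaution that the paper leaves implicit, but it does not change the argument.
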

\begin{proof}
    The rank of $\mathcal{C}(U)$ at $x$ is by definition the rank of $\mathcal{C}_x$, and $\mathcal{C}(U)$ is projective if and only if $\mathcal{C}_x$ is projective for all $x\in U$ \cite[Exercise 4.11b]{CA}. Then the statement follows from proposition \ref{prop3.3}.
\end{proof}

Let $(\mathcal{C},\mathcal{Q})$ and $(\mathcal{C}',\mathcal{Q}')$ be two octonion algebras over $X$. A \emph{morphism} $\varphi:(\mathcal{C},\mathcal{Q})\to(\mathcal{C}',\mathcal{Q}')$ is a natural transformation $\varphi:\mathcal{C}\to\mathcal{C}'$ such that $\varphi(U)$ is an algebra morphism for all open sets $U\subset X$.

\begin{remark}\label{remark3.5}
    If we have an isomorphism $\varphi:(\mathcal{C}',\mathcal{Q}')\to(\mathcal{C},\mathcal{Q})$ of octonion algebras over $X$, we also have an isomorphism of octonion algebras $(\mathcal{C}(U),\mathcal{Q}(U))\simeq(\mathcal{C}'(U),\mathcal{Q}'(U))$ over the ring $\mathcal{O}_{X}(U)$. It follows that $\mathcal{Q}(U)$ and $\mathcal{Q}'(U)$ are isometric, i.e.\ 
    \[
    \mathcal{Q}'(U)=\mathcal{Q}(U)\circ\varphi(U)=(\mathcal{Q}\circ\varphi)(U).
    \] 
    Since this holds for all open affine $U\subset X$, and $X$ has an open affine cover, it follows that $\mathcal{Q}'=\mathcal{Q}\circ\varphi$, i.e.\ $\mathcal{Q}$ and $\mathcal{Q}'$ are isometric. We denote that two quadratic forms are isometric by $\mathcal{Q}\sim\mathcal{Q}'$.
\end{remark}

Let $(\mathcal{C},\mathcal{Q})$ be an octonion algebra over $X$. For an octonion algebra over a ring, the form induces a bilinear form, a trace, and a natural involution. The same goes for the quadratic form $\mathcal{Q}$. The bilinear form is the morphism of sheaves $\mathcal{B_Q}:\mathcal{C}\times\mathcal{C}\to\mathcal{O}_X$ defined for each open set $U\subset X$ by
\[
\mathcal{B_Q}(U):(c,c')\mapsto\mathcal{Q}(U)(c+c')-\mathcal{Q}(U)(c)-\mathcal{Q}(U)(c').
\]
The trace is then given by the morphism of sheaves $\text{tr}_\mathcal{C}:\mathcal{C}\to\mathcal{O}_X$ defined for each open set $U\subset X$ by
\[
\text{tr}_\mathcal{C}(U):c\mapsto\mathcal{B_Q}(U)(1,c)
\]
and the natural involution is the morphism of sheaves $\kappa_\mathcal{C}:\mathcal{C}\to\mathcal{C}$ defined for each open set $U\subset X$ by
\[
\kappa_\mathcal{C}(U):c\mapsto\text{tr}_\mathcal{C}(U)(c)\cdot1_{\mathcal{C}(U)}-c=:\overline{c}
\]
Now, let $a\in\mathcal{C}(X)$ be a global section. Let $L_a:\mathcal{C}\to\mathcal{C}$ and $R_a:\mathcal{C}\to\mathcal{C}$ be the morphisms of sheaves defined for any open $U\subset X$ by
\begin{align*}
    L_a(U):x\mapsto\text{res}_{X,U}(a)\cdot x,\quad R_a(U):x\mapsto x\cdot\text{res}_{X,U}(a).
\end{align*}
Also, let $B_a:=R_aL_a$. 

The \emph{octonion unit sphere} is the $X$–scheme defined by 
\begin{align*}
    \mathbf{S}_\mathcal{Q_C}:\mathbf{Sch}/X&\to\mathbf{Set}\\
    Y&\mapsto\{c\in \mathcal{C}_Y(Y)|\mathcal{Q}_{\mathcal{C}_Y}(Y)(c)=1\}.
\end{align*}

\begin{definition}[Isotope]
    Let $\mathcal{C}$ be an octonion algebra over $X$. For each pair $a,b\in\mathbf{S}_{\mathcal{Q_C}}(X)$, we have an \emph{isotope of} $\mathcal{C}$, denoted $\mathcal{C}^{a,b}$, which is the sheaf of $\mathcal{O}_X$-algebras such that, for any open $U\subset X$, $(\mathcal{C}^{a,b}(U),\ast)$ is the $\mathcal{O}_X(U)$-algebra with multiplication
    \[
    x\ast y:=R_a(U)(x)\cdot L_b(U)(y).
    \]
\end{definition}

\begin{remark}\label{anm4.9}
    Using the quadratic form $\mathcal{Q}$ of $\mathcal{C}$, the isotope $\mathcal{C}^{a,b}$ becomes an octonion algebra over $X$. Also, it follows that $(\mathcal{C}^{a,b}(U),\mathcal{Q}(U))$ is an isotope of $(\mathcal{C}(U),\mathcal{Q}(U))$ as $\mathcal{O}_X(U)$–octonion algebras, for all open affine $U\subset X$. By this and remark \ref{anm4.2}, we see that the definitions of an octonion algebra over a scheme and of an isotope are natural generalizations of those over rings, as we wanted.
\end{remark}
    
Now, let $Y$ be an $X$–scheme. We make an octonion algebra $\mathcal{C}$ over $X$ into an octonion algebra $\mathcal{C}_Y$ over $Y$. Consider the presheaf 
\[
V\mapsto (f^{-1}\mathcal{C})(V)\otimes_{(f^{-1}\mathcal{O}_X)(V)}\mathcal{O}_Y(V),
\]
and let $\mathcal{C}_Y$ be its sheafification (with respect to the Zariski site). Then $\mathcal{C}_Y$, together with $\mathcal{Q}_Y:=f^{-1}\mathcal{Q}\otimes id$, where $f^{-1}\mathcal{Q}$ is the inverse sheaf functor acting on the sheaf morphism $\mathcal{Q}$, is an octonion algebra over $Y$ \cite[Proposition 1.7]{HP}. (The motivation is the construction in remark \ref{anm3.5}. If $X=\mathbf{Spec}(R)$ and $Y=\mathbf{Spec}(S)$, we get exactly $\mathcal{C}_Y(Y)=C(X)_S$.)

\begin{remark}\label{anm4.3}
    \begin{enumerate}
        \item
        Let $U\subset X$ be an open set. For a section  $a\in\mathcal{C}(U)$ of an octonion algebra $\mathcal{C}$ over $X$, we have, from the definition of $\mathcal{C}_Y$, the associated section $a_Y:=[a]\otimes1\in\mathcal{C}_Y(f^{-1}(U))$, where $[a]$ is the equivalence class of $a$ in $(f^{-1}\mathcal{C})(f^{-1}(U))$. In particular, if $a\in\mathcal{C}(X)$ is a global section, $a_Y\in\mathcal{C}_Y(Y)$ is a global section.
        
        \item 
        Let $V\subset Y$ be an open set. In both $(\mathcal{C}_Y)^{a_Y,b_Y}(V)$ and $(\mathcal{C}^{a,b})_Y(V)$, the elements are of the form $c_Y$, for some $c\in \mathcal{C}(U)$, $fV\subset U\subset X$, where $U$ is open. Elements $c_Y,d_Y\in(\mathcal{C}_Y)^{a_Y,b_Y}$ are multiplied as
        \[
        c_Y\ast_1d_Y=\big(c_Y\cdot\text{res}_{Y,V}(a_Y)\big)\big(\text{res}_{Y,V}(b_Y)\cdot d_Y\big),
        \]
        while $c_Y,d_Y\in(\mathcal{C}^{a,b})_Y$ are multiplied as
        \[
        c_Y\ast_2d_Y=\big[((c\cdot\text{res}_{X,U}(a))(\text{res}_{X,U}(b)\cdot d))\big]\otimes 1.
        \]
        However,
        \begin{align*}
            c_Y\ast_2d_Y&=\big[((c\cdot\text{res}_{X,U}(a))(\text{res}_{X,U}(b)\cdot d))\big]\otimes 1\\
            &=\big[(c\cdot\text{res}_{X,U}(a))\big]\otimes1\cdot\big[(\text{res}_{X,U}(b)\cdot d)\big]\otimes1\\
            &=\big([c]\otimes1\cdot[\text{res}_{X,U}(a)]\otimes1\big)\cdot\big([\text{res}_{X,U}(b)]\otimes1\cdot[d]\otimes1\big)\\
            &=\big(c_Y\cdot\text{res}_{Y,V}(a_Y)\big)\big(\text{res}_{Y,V}(b_Y)\cdot d_Y\big)\\
            &=c_Y\ast_1d_Y,
        \end{align*}
        by naturality of the restriction maps. So $(\mathcal{C}_Y)^{a_Y,b_Y}=(\mathcal{C}^{a,b})_Y$, and we denote this sheaf by $\mathcal{C}_Y^{a,b}$.
    \end{enumerate}
\end{remark}

\begin{example}
    Recall that over $\mathbb{Z}$, we have the Zorn algebra $\mathbf{Zorn}(\mathbb{Z})$. By the contravariant equivalence of $\mathbf{Ring}$ and $\mathbf{Aff}$, $\mathbf{Zorn}(\mathbb{Z})$ defines an octonion algebra over $\mathbf{Spec}(\mathbb{Z})$ as well. Let $Y$ be a scheme. By the pullback construction, and the fact that any scheme is a $\mathbb{Z}$–scheme, $\mathbf{Zorn}(\mathbb{Z})_Y$ exists and is an octonion algebra over $Y$.
\end{example}

\section{Twists, isotopes, and isometries}\label{TII}
In this section, we will prove the generalizations of four results in \cite{OA} (theorems 4.1, 4.6, and 6.6, and corollary 6.7, corresponding to theorems 4.3, 4.7, and 4.12, and corollary 4.13 in this article). 

\subsection{The twisted octonion algebra}
Let us start with the affine case. We want to define the $R$–group schemes $\mathbf{Aut}(\mathcal{C})$, $\mathbf{SO}(q_C)$, and $\mathbf{RT}(C)$. Let $(C,q_C)$ be an octonion $R$–algebra. Denote by $\mathbf{Aut}(C)$ the automorphism $R$–group scheme of $C$, defined, for any $R$–ring $S$ by
\[
\mathbf{Aut}(C)(S):=\mathbf{Aut}(C_S),
\]
which is a closed subscheme of the $R$–group scheme $\mathbf{GL}(C)$. By $\mathbf{SO}(q_C)$ we denote the special orthogonal $R$–group scheme associated to $q_C$ \cite[Section 4.3]{calmès}. The embedding $\mathbf{Aut}(C)\to\mathbf{GL}(C)$ induces a closed immersion $\mathbf{Aut}(C)\to\mathbf{SO}(q_C)$. Define $\mathbf{RT}(C)$ as the closed $R$–subscheme of $\mathbf{SO}(q_C)^3$ such that for any $R$–ring $S$ we have
\[
\mathbf{RT}(C)(S)=\left\{(t_1,t_2,t_3)\in\mathbf{SO}(q_C)^3|t_{1}(xy)=\overline{t_{2}(\overline{x})}\cdot\overline{t_{3}(\overline{y})}\text{ for any }x,y\in C_S\right\}.
\]
Now we can continue with octonion algebras over general schemes. Let $X$ be a scheme. From here onwards, unless otherwise stated, we denote an $X$–scheme $Y\overset{f}{\to}X$ by its scheme $Y$. Let $(\mathcal{C},\mathcal{Q_C})$ be an octonion algebra over $X$. Let $\mathbf{Aut}(\mathcal{C})$ be the affine group scheme over $X$ such that
\[
\mathbf{Aut}(\mathcal{C})(Y):=\mathbf{Aut}(\mathcal{C}_Y).
\]
The $X$–group scheme $\mathbf{O}(\mathcal{Q}_\mathcal{C})$ is defined by
\begin{align*}
    \mathbf{O}(\mathcal{Q}_\mathcal{C})(Y):=\{\varphi:\mathcal{C}_Y\to\mathcal{C}_Y|&\varphi_Z:(\mathcal{C}_Y)_Z\to(\mathcal{C}_Y)_Z\text{ is linear, invertible,}\\
    &\text{and }\mathcal{Q}_{(\mathcal{C}_Y)_Z}\circ\varphi=\mathcal{Q}_{(\mathcal{C}_Y)_Z}\text{ for any $Y$–scheme }Z\}
\end{align*}
\cite[Section 4.1]{calmès} and, similar to the affine case, $\mathbf{SO}(\mathcal{Q}_\mathcal{C})$ is the special orthogonal $X$–group scheme \cite[Section 4.3]{calmès}. Recall that for $c\in\mathcal{C}(U)$, we have the natural involution $\kappa_\mathcal{C}(U)(c)=\text{tr}_\mathcal{C}(U)(c)-c$. We then define $\mathbf{RT}(\mathcal{C})$ as the closed $X$–subscheme of $\mathbf{SO}(\mathcal{Q}_\mathcal{C})^3$ defined by
\begin{align*}
    \mathbf{RT}(\mathcal{C})(Y):=\{
    (t_1,t_2,t_3)\in\mathbf{SO}&(\mathcal{Q}_\mathcal{C})(Y)^3\big|\text{ for any $Y$–scheme }Z\\
    &t_{1Z}(Z)(xy)=(\kappa_{(\mathcal{C}_Y)_Z} t_{2Z}\kappa_{(\mathcal{C}_Y)_Z})(Z)(x)\cdot(\kappa_{(\mathcal{C}_Y)_Z} t_{3Z}\kappa_{(\mathcal{C}_Y)_Z})(Z)(y)\}.
\end{align*}
where $t_{iZ}$, $i=1,2,3$, are induced by the $Y$–scheme structure morphism $Z\to Y$.

\begin{remark}\label{anm4.1}
    These schemes can equivalently be defined in with a global–local point of view:
    \[
    \mathbf{O}(\mathcal{Q}_\mathcal{C})(Y)=\{\varphi:\mathcal{C}_Y\to\mathcal{C}_Y|\text{ for all open affine }V\subset Y; \varphi(V)\in\mathbf{O}(\mathcal{Q}_{\mathcal{C}_Y}(V))\},
    \]
    \[
    \mathbf{SO}(\mathcal{Q}_\mathcal{C})(Y)=\{\varphi\in\mathbf{O}(\mathcal{Q}_\mathcal{C})(Y)|\text{ for all open affine }V\subset Y; \varphi(V)\in\mathbf{SO}(\mathcal{Q}_{\mathcal{C}_Y}(V))\},
    \]
    \begin{align*}
        \mathbf{RT}(\mathcal{C})(Y)=\{
        (t_1,t_2,t_3)\in\mathbf{SO}(\mathcal{Q}_\mathcal{C})(Y)^3|&\text{ for all open affine }V\subset Y\\
        &(t_1(V),t_2(V),t_3(V))\in\mathbf{RT}(\mathcal{C}_Y(V))(\mathcal{O}_Y(V))\},
    \end{align*}
    From \cite{OA}, we know many properties of these group schemes over rings ($\simeq$ affine schemes). We will see that this perspective will prove useful.
\end{remark}
\begin{example}\label{ex5.2}
    We see that, for any affine open $U\subset X$, $a\in\mathcal{C}(X)$, $x,y\in\mathcal{C}(U)$, the triple $(B_a,R_{\overline{a}},L_{\overline{a}})$ satisfies
    \begin{align*}
        B_a(U)(xy)&=R_a(U)L_a(U)(xy)\\
        &=\text{res}_{X,U}(a)\cdot ((xy)\cdot\text{res}_{X,U}(a))\\
        &=(\overline{\text{res}_{X,U}(\overline{a})\cdot\overline{x}})\cdot(\overline{\overline{y}\cdot\text{res}_{X,U}(\overline{a})})\\
        &=R_{\overline{a}}(U)(x)L_{\overline{a}}(U)(y),
    \end{align*}
    where the third equality follows from the fourth Moufang identity. Hence $(B_a,R_{\overline{a}},L_{\overline{a}})\in\mathbf{RT}(\mathcal{C})(U)$.
\end{example}

We have a natural action of $\mathbf{RT}(\mathcal{C})$ on $\mathbf{S}_\mathcal{Q_C}\times_X\mathbf{S}_\mathcal{Q_C}=:\mathbf{S}_\mathcal{Q_C}^2$ by
\[
(t_1,t_2,t_3).(u,v)=(t_3(Y)(u),t_2(Y)(v))
\]
for any $(t_1,t_2,t_3)\in\mathbf{RT}(\mathcal{C})(Y)$ and any $u,v\in\mathbf{S}_\mathcal{Q_C}(Y)$. We also have a map $\Pi:\mathbf{RT}(\mathcal{C})\to\mathbf{S}_\mathcal{Q_C}^2$, defined for any $\mathbf{t}:=(t_1,t_2,t_3)\in\mathbf{RT}(\mathcal{C})(Y)$ by
\[
\Pi_Y: \mathbf{t}\mapsto(t_3(Y)(1),t_2(Y)(1)).
\]
\begin{theorem}\label{thm4.10}
\begin{enumerate}
    \item 
    The stabilizer $\text{\emph{Stab}}_{\mathbf{RT}(\mathcal{C})}(1,1)$ is $i(\mathbf{Aut}(\mathcal{C}))$, where $i:\mathbf{Aut}(\mathcal{C})\hookrightarrow\mathbf{RT}(\mathcal{C})$ is the natural transformation $i_Y:t\mapsto(t,t,t)$.
    
    \item
    The fppf quotient sheaf $\mathbf{RT}(\mathcal{C})/\mathbf{Aut}(\mathcal{C})$ is representable by an $X$–scheme and the map
    \[
    \mathbf{RT}(\mathcal{C})/\mathbf{Aut}(\mathcal{C})\to\mathbf{S}_\mathcal{Q_C}^2,
    \]
    induced by $\Pi$, is an $X$–isomorphism.
\end{enumerate}
\end{theorem}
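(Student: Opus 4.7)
The plan is to handle part (1) by a direct computation from the defining identity of $\mathbf{RT}(\mathcal{C})$, and then bootstrap part (2) from its affine counterpart in \cite{OA} using the global-local description in Remark \ref{anm4.1}.

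For (1), I would fix any $Y$-point $(t_1, t_2, t_3)$ of $\mathbf{RT}(\mathcal{C})$ with $t_2(1) = t_3(1) = 1$. Since on an octonion algebra $\mathcal{Q}(1) = 1$, one has $\text{tr}(1) = 2$, $\kappa(1) = 1$, and $\kappa^2 = \text{id}$. Because $t_2, t_3 \in \mathbf{SO}(\mathcal{Q_C})(Y)$ fix $1$, they preserve the trace $\text{tr} = \mathcal{B_Q}(1, \cdot)$ and therefore commute with $\kappa$. Setting $y = 1$ (resp.\ $x = 1$) in the defining identity $t_1(xy) = (\kappa t_2 \kappa)(x) \cdot (\kappa t_3 \kappa)(y)$ then yields $t_1 = \kappa t_2 \kappa = t_2$ and $t_1 = \kappa t_3 \kappa = t_3$. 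Substituting back gives $t_1(xy) = t_1(x) t_1(y)$, so $t_1 \in \mathbf{Aut}(\mathcal{C})(Y)$ and $(t_1, t_2, t_3) = i(t_1)$. The reverse inclusion $i(\mathbf{Aut}(\mathcal{C})) \subset \text{Stab}_{\mathbf{RT}(\mathcal{C})}(1,1)$ is immediate: any algebra automorphism fixes $1$, lies in $\mathbf{SO}(\mathcal{Q_C})$, commutes with $\kappa$, and manifestly satisfies the defining identity.

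For (2), I would cover $X$ by open affines $\{U_\lambda = \mathbf{Spec}(R_\lambda)\}$ and set $C_\lambda := \mathcal{C}(U_\lambda)$, which is an $R_\lambda$-octonion algebra by Remark \ref{anm4.2}. By Remark \ref{anm4.1}, the restrictions of $\mathbf{RT}(\mathcal{C})$, $\mathbf{Aut}(\mathcal{C})$, and $\mathbf{S}^2_{\mathcal{Q_C}}$ to $(\mathbf{Sch}/U_\lambda)_{\text{fppf}}$ coincide with the affine $R_\lambda$-schemes $\mathbf{RT}(C_\lambda)$, $\mathbf{Aut}(C_\lambda)$, and $\mathbf{S}^2_{q_{C_\lambda}}$, and $\Pi$ restricts to its affine analogue. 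The affine case of the theorem (\cite[Theorem 4.6]{OA}) then says that $\Pi|_{U_\lambda}$ induces an isomorphism of fppf sheaves $\mathbf{RT}(C_\lambda)/\mathbf{Aut}(C_\lambda) \to \mathbf{S}^2_{q_{C_\lambda}}$; in particular $\Pi|_{U_\lambda}$ is an fppf epimorphism for every $\lambda$.

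Since being an fppf epimorphism is local on the target and $\{U_\lambda \to X\}$ is a Zariski (hence fppf) cover, $\Pi$ is an fppf epimorphism. Combined with part (1), which identifies the orbit-equivalence relation of $\Pi$ with the $\mathbf{Aut}(\mathcal{C})$-action on $\mathbf{RT}(\mathcal{C})$, this yields an isomorphism of fppf sheaves $\mathbf{RT}(\mathcal{C})/\mathbf{Aut}(\mathcal{C}) \to \mathbf{S}^2_{\mathcal{Q_C}}$. Since $\mathbf{S}^2_{\mathcal{Q_C}}$ is representable by an $X$-scheme by construction, so is the quotient. The main obstacle I anticipate is the bookkeeping in the reduction step: one must verify carefully that the constructions $\mathbf{RT}$, $\mathbf{Aut}$, and $\mathbf{S}^2_{\mathcal{Q}}$ commute both with the base change $\mathcal{C} \mapsto \mathcal{C}_Y$ and with restriction to open affines, as encoded by Remark \ref{anm4.1}; granting this, (1) and (2) follow from a direct algebraic computation and an fppf descent argument, respectively.
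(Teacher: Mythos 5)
Your proof is correct in substance but follows a genuinely different route from the paper in both parts. For part (1), the paper does not compute: it reduces to open affines and cites \cite[Lemma 3.5]{OA} (to see that $i$ lands in $\mathbf{RT}(\mathcal{C})$ at all) and \cite[Proposition 3.8]{OA} (for the identification of the stabilizer), whereas you reprove the affine statement directly from the defining identity, using that an isometry fixing $1$ preserves the trace and hence commutes with $\kappa$, so that specializing $x=1$ and $y=1$ forces $t_1=\kappa t_2\kappa=t_2=\kappa t_3\kappa=t_3$ and multiplicativity of $t_1$. This is more self-contained; the one point you assert without justification is that an algebra automorphism lies in $\mathbf{SO}(\mathcal{Q_C})$ rather than merely $\mathbf{O}(\mathcal{Q_C})$ — that is precisely what the paper's appeal to \cite[Lemma 3.5]{OA} supplies, and you should cite it or argue it. For part (2), the paper first proves that $\mathbf{Aut}(\mathcal{C})$ is flat over $X$ and invokes \cite[Expos\'e XVI, Theorem 2.2]{SGA} to get representability of the quotient and injectivity of the induced map, then shows (via a universal-property argument) that fppf sheafification of the presheaf quotient commutes with restriction to the $U_i$, and finally quotes the affine isomorphism of \cite[Theorem 4.1(2)]{OA}. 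You instead establish the sheaf isomorphism directly — $\Pi$ is an fppf epimorphism because this can be checked on the Zariski cover $\{U_\lambda\to X\}$ where it follows from the affine theorem, and injectivity of the induced map comes from part (1) together with the fibre identification $\Pi_Y(\mathbf{t})=\Pi_Y(\mathbf{t}')\iff\mathbf{t}^{-1}\mathbf{t}'\in i_Y(\mathbf{Aut}(\mathcal{C})(Y))$ (which you use implicitly; it is the content of remark \ref{anm4.10} and deserves a sentence) — and then you read off representability from the isomorphism with the representable sheaf $\mathbf{S}_\mathcal{Q_C}^2$. This buys you a shorter argument that avoids both the flatness computation and the descent of sheafification along restriction, at the cost of having to spell out that mono plus epi equals iso for fppf sheaves and, as you rightly flag, of carefully verifying the identifications of remark \ref{anm4.1} between the restricted $X$-group schemes and their affine counterparts, which both proofs ultimately rely on.
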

\begin{proof}
    \begin{enumerate}
        \item 
        First we have to show that $i$ is well-defined. Take $t\in\mathbf{SO}(\mathcal{Q}_\mathcal{C})(Y)$ and open affine $V\subset Y$. Then $t(V)$ is an automorphism if and only if \[
        (t(V),t(V),t(V))\in\mathbf{RT}(\mathcal{C}_Y(V))(\mathcal{O}_Y(V))\]
        \cite[Lemma 3.5]{OA}. Hence $t\in\mathbf{Aut}(\mathcal{C})(Y)$ if and only if $(t,t,t)\in\mathbf{RT}(\mathcal{C})(Y)$.
        
        Now, $(t_1(V),t_2(V),t_3(V))\in\mathbf{RT}(\mathcal{C}_Y(V))(\mathcal{O}_Y(V))$ satisfies 
        \[
        t_3(V)(1)=t_2(V)(1)=1
        \]
        if and only if
        \[
        (t_1(V),t_2(V),t_3(V))\in i_V\big(\mathbf{Aut}(\mathcal{C}_Y(V))\big)
        \]
        \cite[Proposition 3.8]{OA}. Since it holds for all open affine $V\subset Y$, and $Y$ can be covered by open affines, we have that Stab$_{\mathbf{RT}(\mathcal{C})}(1,1)=i(\mathbf{Aut}(\mathcal{C}))$. 
        
        \item
        We first want to show that $\mathbf{Aut}(\mathcal{C})$ is flat over $X$. Let $h:\mathbf{Aut}(\mathcal{C})\to X$ be the canonical map, and $X=\bigcup_iU_i$ an affine cover, $U_i=\mathbf{Spec}(R_i)$ (note that, by \cite[Lemma 34.7.2]{stacks}, this cover is also an fppf covering). Since $\mathbf{Aut}(\mathcal{C})$ is an affine $X$-scheme, $V_i:=h^{-1}(U_i)$ is an affine open set. $\mathbf{Aut}(\mathcal{C})|_{U_i}$ is an $R_i$–scheme, and, by definition, $V_i$ is an open set of $\mathbf{Aut}(\mathcal{C})|_{U_i}$. Then $\mathcal{O}_{\mathbf{Aut}(\mathcal{C})|_{U_i}}(V_i)$ is a flat $R_i$–module \cite[Corollary 4.12]{LPR}. But $\mathcal{O}_{\mathbf{Aut}(\mathcal{C})|_{U_i}}(V_i)=\mathcal{O}_{\mathbf{Aut}(\mathcal{C})}(V_i)$, so $h$ is a flat morphism \cite[Lemma 29.25.3]{stacks}. Then $\mathbf{Aut}(\mathcal{C})$ is flat over $X$. It follows that $\mathbf{RT}(\mathcal{C})/\mathbf{Aut}(\mathcal{C})$ is representable by an $X$–scheme, and the induced map is a monomorphism \cite[Expose XVI Theorem 2.2]{SGA}.
        
        Let
        \begin{align*}
            P:\mathbf{Sch}/X&\to\mathbf{Set}\\
            Z&\mapsto\mathbf{RT}(\mathcal{C})(Z)/\mathbf{Aut}(\mathcal{C})(Z)
        \end{align*}
        be a presheaf. Then $\widetilde{P}:=\mathbf{RT}(\mathcal{C})/\mathbf{Aut}(\mathcal{C})$ is its the (fppf) sheafification of $P$. Let $\{U_i\}_{i\in I}$ be an affine cover of $X$ and consider the presheaf
        \begin{align*}
            Q_i:\mathbf{Sch}/U_i&\to\mathbf{Set}\\
            Z&\mapsto\mathbf{RT}(\mathcal{C})|_{U_i}(Z)/\mathbf{Aut}(\mathcal{C})|_{U_i}(Z)
        \end{align*}
        Then $\widetilde{Q_i}:=\mathbf{RT}(\mathcal{C})|_{U_i}/\mathbf{Aut}(\mathcal{C})|_{U_i}$ is its (fppf) sheafification. For any $Z\in\mathbf{Sch}/U_i\subset\mathbf{Sch}/X$, the presheaves $P$ and $Q_i$ coincide:
        \[
        P(Z)=\mathbf{RT}(\mathcal{C})(Z)/\mathbf{Aut}(\mathcal{C})(Z)=\mathbf{RT}(\mathcal{C})|_{U_i}(Z)/\mathbf{Aut}(\mathcal{C})|_{U_i}(Z)=Q_i(Z).
        \]
        Then both $\widetilde{Q_i}$ and $\Big(\mathbf{RT}(\mathcal{C})/\mathbf{Aut}(\mathcal{C})\Big)\Big|_{U_i}$ satisfy the universal property of sheafification of $P|_{U_i}$. It follows from uniqueness that
        \[
        \Big(\mathbf{RT}(\mathcal{C})/\mathbf{Aut}(\mathcal{C})\Big)\Big|_{U_i}\simeq\widetilde{Q_i}=\mathbf{RT}(\mathcal{C})|_{U_i}/\mathbf{Aut}(\mathcal{C})|_{U_i}.
        \]
        Note also that, for any $Z\in\mathbf{Sch}/U_i\subset\mathbf{Sch}/X$, we also have the equivalence
        \begin{align*}
            (\mathbf{S}_\mathcal{Q_C}|_{U_i}\times_{U_i}\mathbf{S}_\mathcal{Q_C}|_{U_i})(Z)&=\mathbf{S}_\mathcal{Q_C}|_{U_i}(Z)\times_{U_i}\mathbf{S}_\mathcal{Q_C}|_{U_i}(Z)\\
            &= \mathbf{S}_\mathcal{Q_C}(Z)\times_X\mathbf{S}_\mathcal{Q_C}(Z)\\
            &=(\mathbf{S}_\mathcal{Q_C}\times_X\mathbf{S}_\mathcal{Q_C})(Z)\\
            &=(\mathbf{S}_\mathcal{Q_C}\times_X\mathbf{S}_\mathcal{Q_C})|_{U_i}(Z).
        \end{align*}
        Now, restricting the induced monomorphism we get
        \[
        \mathbf{RT}(\mathcal{C})|_{U_i}/\mathbf{Aut}(\mathcal{C})|_{U_i}\to\big(\mathbf{S}_\mathcal{Q_C}|_{U_i}\big)^2,
        \]
        which is an $U_i$–isomorphism \cite[Theorem 4.1(2)]{OA}. It follows that
        \[
        \mathbf{RT}(\mathcal{C})/\mathbf{Aut}(\mathcal{C})\overset{\sim}{\to}\mathbf{S}_\mathcal{Q_C}^2
        \]
        is an $X$–isomorphism.
    \end{enumerate}
\end{proof}

Let $X=\bigcup_iU_i$ be an affine cover and $\mathbf{G}:=\mathbf{Aut}(\mathcal{C})$. It follows from theorem \ref{thm4.10} and \cite[Corollary III.4.1.8]{GA} that $\Pi^{-1}|_{U_i}$ is a $\mathbf{G}|_{U_i}$–torsor in the fppf topology. Then $\Pi^{-1}$ is a $\mathbf{G}$–torsor over $\mathbf{S}_\mathcal{Q_C}^2$ in the fppf topology.

We have the additive $X$–group scheme $\mathbf{W}(\mathcal{C})$ defined by
\[
\mathbf{W}(\mathcal{C})(Y)=\mathcal{C}_Y(Y).
\]
Given $a,b\in\mathbf{S}_\mathcal{Q_C}(Y)$, consider the $\mathbf{G}$–torsor $\mathbf{E}^{a,b}:=\Pi^{-1}(a,b)$ over $X$. Let the presheaf $\mathbf{P}^{a,b}(\mathcal{C})$ of $X$–schemes be definied by
\[
\mathbf{P}^{a,b}(\mathcal{C})(Y)=(\Pi^{-1}_Y(a_Y,b_Y)\times \mathcal{C}_Y(Y))/\sim
\]
where
\[
(\mathbf{t},x)\sim(\mathbf{t}',x')\iff\exists g\in\mathbf{G}(Y):(\mathbf{t}g,x)=(\mathbf{t}',g(Y)(x')),
\]
and let $\mathbf{E}^{a,b}\wedge^\mathbf{G} \mathbf{W}(\mathcal{C})$ be its sheafification.

\begin{remark}\label{anm4.10}
    Let $\Pi_Y(\mathbf{t})=\Pi_Y(\mathbf{t}')$. Then
    \begin{align*}
        t_3(Y)(1)&=t_3'(Y)(1)\iff t_3^{-1}(Y)t_3'(Y)(1)=1\iff(t_3^{-1}t_3')(Y)(1)=1\\
        t_2(Y)(1)&=t_2'(Y)(1)\iff t_2^{-1}(Y)t_2'(Y)(1)=1\iff(t_2^{-1}t_2')(Y)(1)=1
    \end{align*}
    which implies that $\mathbf{t}^{-1}\mathbf{t}'\in\mathbf{Stab}_{\mathbf{RT}(\mathcal{C})}(1,1)(Y)=i_Y(\mathbf{G}(Y))$.
\end{remark}

The presheaf $\mathbf{P}^{a,b}(\mathcal{C})$ is in fact a presheaf of algebras; the linear structure on $\mathbf{P}^{a,b}(\mathcal{C})(Y)$ is induced by 
\[
(\mathbf{t},x)+(\mathbf{t}',x')=(\mathbf{t},x+(t_1^{-1}t_1')(Y)(x'))
\]
and, for $s\in\mathcal{O}_Y(Y)$,
\[
s(\mathbf{t},x)=(\mathbf{t},sx),
\]
and the multiplication
\[
(\mathbf{t},x)(\mathbf{t}',x')=(\mathbf{t},x(t_1^{-1}t_1')(Y)(x')).
\]
This makes $\mathbf{P}^{a,b}(\mathcal{C})(Y)$ into an $\mathcal{O}_Y(Y)$–algebra. The unity is the class of $(\mathbf{t},1)$, indepentent of choice of $\mathbf{t}\in\Pi^{-1}_Y(a_Y,b_Y)$ by remark \ref{anm4.10}. From this and the fact that Cayley–Hamilton equation holds on $\mathcal{Q}_Y(V)$ for any open $V\subset Y$, it follows that $\mathbf{P}^{a,b}(\mathcal{C})(Y)$ has the same quadratic form as $\mathcal{C}_Y$ \cite[p.885]{OA}. So $\mathbf{P}^{a,b}(\mathcal{C})(Y)$ is an $\mathcal{O}_Y(Y)$–octonion algebra.

\begin{lemma}\label{prop4.11}
    Let $a,b\in\mathbf{S}_\mathcal{Q_C}(X)$ and $(t_1,t_2,t_3)\in\mathbf{RT}(\mathcal{C})(Y)$. Then $t_1(Y)$ is an isomorphism $\mathcal{C}_Y(Y)\to\mathcal{C}_Y^{a,b}(Y)$ if and only if $t_2(Y)(1)=\eta b_Y$ and $t_3(Y)(1)=\eta a_Y$ for some $\eta\in\boldsymbol{\boldsymbol{\mu}}_2(Y)$.
\end{lemma}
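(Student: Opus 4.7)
The plan is to treat the two implications separately. I will prove $(\Leftarrow)$ directly at the $Y$-level using the $\mathbf{RT}$-identity together with standard octonion identities, and I will reduce $(\Rightarrow)$ to an affine open cover of $Y$ and invoke the ring-level analogue of the statement from \cite{OA}.

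For $(\Leftarrow)$: assuming $t_2(Y)(1)=\eta b_Y$ and $t_3(Y)(1)=\eta a_Y$ for some $\eta\in\boldsymbol{\mu}_2(Y)$, I specialize the $\mathbf{RT}$-identity $t_1(Y)(xy)=\overline{t_2(Y)(\overline{x})}\cdot\overline{t_3(Y)(\overline{y})}$ first at $y=1$ and then at $x=1$, and invert the resulting expressions using the basic octonion identity $(w\overline{u})u=q(u)w$ (a consequence of Artin's theorem on two-generator subalgebras; applicable since $q(a_Y)=q(b_Y)=1$). This yields
\[
\overline{t_2(Y)(\overline{x})}=t_1(Y)(x)\cdot\eta a_Y,\qquad\overline{t_3(Y)(\overline{y})}=\eta b_Y\cdot t_1(Y)(y).
\]
Reinserting these into the full $\mathbf{RT}$-identity and using $\eta^2=1$ to absorb the scalar factor gives
\[
t_1(Y)(xy)=\bigl(t_1(Y)(x)\cdot a_Y\bigr)\bigl(b_Y\cdot t_1(Y)(y)\bigr)=t_1(Y)(x)\ast t_1(Y)(y),
\]
the isotope product. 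Unit preservation follows by setting $x=y=1$, since then $t_1(Y)(1)=\eta\overline{b_Y}\cdot\eta\overline{a_Y}=\overline{b_Y}\cdot\overline{a_Y}=(a_Yb_Y)^{-1}$, which is the unity of $\mathcal{C}_Y^{a,b}(Y)$; bijectivity is immediate from $t_1\in\mathbf{SO}(\mathcal{Q}_\mathcal{C})(Y)$.

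For $(\Rightarrow)$: choose an affine open cover $\{V_i=\mathbf{Spec}(S_i)\}$ of $Y$. Over each $V_i$, the restricted triple $(t_1(V_i),t_2(V_i),t_3(V_i))$ lies in $\mathbf{RT}(\mathcal{C}_Y(V_i))(S_i)$ by Remark~\ref{anm4.1}, and $\mathcal{C}_Y^{a,b}(V_i)$ coincides with the $S_i$-algebra isotope $\mathcal{C}_Y(V_i)^{a_Y|_{V_i},b_Y|_{V_i}}$ by the compatibility of pullback and isotope constructions. Interpreting the iso-hypothesis at the level of sheaves of algebras, in line with the paper's definition of a morphism of octonion algebras over schemes, the map $t_1(V_i):\mathcal{C}_Y(V_i)\to\mathcal{C}_Y^{a,b}(V_i)$ is an algebra isomorphism. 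Applying the ring-level analogue of the present lemma, which is embedded in the proof of Theorems 4.6 and 6.6 of \cite{OA}, produces $\eta_i\in\boldsymbol{\mu}_2(V_i)$ with $t_2(V_i)(1)=\eta_i\,b_Y|_{V_i}$ and $t_3(V_i)(1)=\eta_i\,a_Y|_{V_i}$. The $\eta_i$ are uniquely determined because $b_Y|_{V_i}$ is invertible in $\mathcal{C}_Y(V_i)$ with inverse $\overline{b_Y|_{V_i}}$ (as $q(b_Y|_{V_i})=1$): the relation $(\eta-\eta')b_Y|_{V_i}=0$ forces $\eta=\eta'$ after right-multiplication by $\overline{b_Y|_{V_i}}$. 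Uniqueness then yields agreement on overlaps, and the sheaf property of $\boldsymbol{\mu}_2$ glues the $\eta_i$ to a global $\eta\in\boldsymbol{\mu}_2(Y)$ with the required properties.

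The main obstacle I anticipate is a careful extraction of the correct ring-level lemma from \cite{OA}: the affine version of the present statement is implicit in the proofs surrounding Theorems 4.6 and 6.6 of Alsaody–Gille, but it must be identified precisely so that the restriction-and-glue argument above applies verbatim. Once this is done, both directions follow essentially by the local-to-global principle for the Zariski topology on $Y$, combined with the naturality of $\mathcal{C}_Y$, the isotope $\mathcal{C}_Y^{a,b}$, and the $\mathbf{RT}$-relation as constructions over $\mathbf{Sch}/Y$.
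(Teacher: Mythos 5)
Your argument is correct in substance but takes a genuinely different, and much longer, route than the paper. The paper's proof is two sentences: it regards $\mathcal{C}_Y(Y)$ and $\mathcal{C}_Y^{a,b}(Y)$ as octonion algebras over the ring $\mathcal{O}_Y(Y)$, notes that $t_1(Y)$ is an algebra map between them, and quotes the ring-level statement, which is exactly \cite[Proposition~4.5]{OA} --- not something ``implicit in the proofs surrounding Theorems 4.6 and 6.6,'' so the extraction you worry about at the end is a non-issue. By contrast, you reprove $(\Leftarrow)$ from scratch; your computation with the $\mathbf{RT}$-identity, the specializations at $x=1$ and $y=1$, and the cancellation $(w\overline{u})u=q(u)w$ is correct and works directly on global sections, since all the identities involved hold in $\mathcal{C}_Y(Y)$. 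For $(\Rightarrow)$ you restrict to an affine cover, invoke the ring-level result on each piece, and glue the $\eta_i$ using their uniqueness; the uniqueness and gluing steps are fine. The one step you should make honest is the localization of the hypothesis: the lemma assumes $t_1(Y)$ is an isomorphism of \emph{global sections} $\mathcal{C}_Y(Y)\to\mathcal{C}_Y^{a,b}(Y)$, and multiplicativity for the isotope product on global sections does not formally imply multiplicativity on $\mathcal{C}_Y(V_i)$, since local sections need not extend to global ones; you paper over this by ``interpreting the iso-hypothesis at the level of sheaves.'' Either state that stronger reading explicitly (it matches how the lemma is used later), or do as the paper does and apply \cite[Proposition~4.5]{OA} once over $\mathcal{O}_Y(Y)$. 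On the other hand, your local-to-global version has the merit of not relying on the paper's tacit claim that $\mathcal{C}_Y(Y)$ is an octonion algebra over $\mathcal{O}_Y(Y)$ for non-affine $Y$, which does not follow from the definition as literally stated.
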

\begin{proof}
    By definition, $\mathcal{C}_Y(Y)$ and $\mathcal{C}_Y^{a,b}(Y)$ are $\mathcal{O}_Y(Y)$–octonion algebras, and $t_1(Y)$ an algebra homomorphism between them. The statement then follows from \cite[Proposition 4.5]{OA}.
\end{proof}

\begin{lemma}\label{lemma4.12}
Let $a,b\in\mathbf{S}_\mathcal{Q_C}(X)$. For each $Y\in\mathbf{Sch}/X$ such that $\Pi^{-1}_Y(a_Y,b_Y)\neq\varnothing$, the map
\begin{align*}
    \Omega_Y^{a,b}:\mathbf{P}^{a,b}(\mathcal{C})(Y)&\to\mathcal{C}_Y^{a,b}(Y)\\
    (\mathbf{t},x)&\mapsto t_1(Y)(x)
\end{align*}
is an algebra isomorphism.
\end{lemma}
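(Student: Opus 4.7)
The plan is to derive everything from Lemma \ref{prop4.11}, applied with $\eta = 1 \in \boldsymbol{\mu}_2(Y)$. By definition of $\Pi$, an element $\mathbf{t} = (t_1, t_2, t_3) \in \Pi_Y^{-1}(a_Y, b_Y)$ satisfies $t_3(Y)(1) = a_Y$ and $t_2(Y)(1) = b_Y$, so Lemma \ref{prop4.11} immediately yields that for every such $\mathbf{t}$ the map $t_1(Y)$ is an $\mathcal{O}_Y(Y)$-algebra isomorphism $\mathcal{C}_Y(Y) \to \mathcal{C}_Y^{a,b}(Y)$. This is the only substantial input; everything else will be a direct unwinding of definitions.

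First I would verify well-definedness on equivalence classes. If $(\mathbf{t}, x) \sim (\mathbf{t}', x')$ via $g \in \mathbf{G}(Y)$, the relation forces $\mathbf{t}' = \mathbf{t}\,i_Y(g)$ (so $t_1' = t_1 \circ g(Y)$) and $x = g(Y)(x')$, whence $t_1'(Y)(x') = t_1(Y)(g(Y)(x')) = t_1(Y)(x)$. Thus $\Omega_Y^{a,b}$ descends to the quotient. For injectivity, suppose $t_1(Y)(x) = t_1'(Y)(x')$. Remark \ref{anm4.10} furnishes $g \in \mathbf{G}(Y)$ with $\mathbf{t}^{-1}\mathbf{t}' = i_Y(g)$, so $t_1'(Y)(x') = t_1(Y)(g(Y)(x'))$; invertibility of $t_1(Y)$ then gives $x = g(Y)(x')$, so $(\mathbf{t}, x) \sim (\mathbf{t}', x')$. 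For surjectivity, pick any $\mathbf{t}$ in the nonempty fiber and, for $y \in \mathcal{C}_Y^{a,b}(Y)$, take $x := t_1(Y)^{-1}(y)$.

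It remains to show $\Omega_Y^{a,b}$ respects the three operations. Additivity and $\mathcal{O}_Y(Y)$-linearity follow from linearity of $t_1(Y)$: for instance, $\Omega((\mathbf{t}, x) + (\mathbf{t}', x')) = t_1(Y)(x) + t_1(Y)\bigl((t_1^{-1}t_1')(Y)(x')\bigr) = t_1(Y)(x) + t_1'(Y)(x')$, and similarly for scalars. Multiplicativity is where the initial observation is essential: writing $\ast$ for the product of $\mathcal{C}_Y^{a,b}(Y)$,
\[
\Omega\bigl((\mathbf{t}, x)(\mathbf{t}', x')\bigr) = t_1(Y)\bigl(x \cdot (t_1^{-1}t_1')(Y)(x')\bigr) = t_1(Y)(x) \ast t_1(Y)\bigl((t_1^{-1}t_1')(Y)(x')\bigr) = t_1(Y)(x) \ast t_1'(Y)(x'),
\]
where the middle equality uses that $t_1(Y)$ intertwines $\cdot$ and $\ast$. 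The main obstacle to keep in view is precisely this last point: without Lemma \ref{prop4.11} applied to the geometric condition $\mathbf{t} \in \Pi_Y^{-1}(a_Y, b_Y)$, one would only know $t_1(Y)$ is an isometry and not an algebra map, and the multiplicativity step would break. Once that identification is made, the remainder reduces to bookkeeping with the explicit formulas defining $\mathbf{P}^{a,b}(\mathcal{C})(Y)$.
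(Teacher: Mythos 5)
Your proof is correct and follows essentially the same route as the paper's: the key input in both is Lemma \ref{prop4.11} (with $\eta=1$) to identify $t_1(Y)$ as an algebra isomorphism $\mathcal{C}_Y(Y)\to\mathcal{C}_Y^{a,b}(Y)$, and the remaining verifications (well-definedness, linearity, multiplicativity, surjectivity) are the same computations. The only cosmetic difference is injectivity, which you prove directly on arbitrary pairs via Remark \ref{anm4.10}, whereas the paper computes that the kernel of the linear map $\Omega^{a,b}_Y$ is the single class of $(\mathbf{t},0)$ --- both arguments are valid and rest on the same remark.
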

\begin{proof}
    The proof is similar to that of \cite[Theorem 4.6]{OA}.
    
    The map $\Omega^{a,b}_Y$ is well–defined for any $X$–scheme $Y$ since if $(\mathbf{t},x)\sim(\mathbf{t}',x')$ then $x'=g(Y)^{-1}(x)$ and $\mathbf{t}'=\mathbf{t}g$, for some $g\in\mathbf{G}(Y)$, so 
    \[
    t_1'(Y)(x')=(t_1g)(Y)(g(Y)^{-1}(x))=t_1(Y)g(Y)g(Y)^{-1}(x)=t_1(Y)(x).
    \]
    $\Omega^{a,b}(Y)$ is linear since,
    \[
    \Omega^{a,b}_Y(s(\mathbf{t},x))=\Omega^{a,b}_Y((\mathbf{t},sx))=t_1(Y)(sx)=st_1(Y)(x)=s\Omega^{a,b}_Y(\mathbf{t},x),
    \]
    for $s\in\mathcal{O}_Y(Y)$, and 
    \begin{align*}
        \Omega^{a,b}_Y((\mathbf{t},x)+(\mathbf{t}',x'))&=\Omega^{a,b}_Y(\mathbf{t},x+(t_1^{-1}t_1')(Y)(x'))\\
        &=t_1(Y)(x+(t_1^{-1}t_1')(Y)(x'))\\
        &=t_1(Y)(x)+t_1(Y)((t_1^{-1}t_1')(Y)(x'))\\
        &=t_1(Y)(x)+t_1'(Y)(x')\\
        &=\Omega^{a,b}_Y(\mathbf{t},x)+\Omega^{a,b}_Y(\mathbf{t}',x').
    \end{align*}
    We have
    \[
    \Omega^{a,b}_Y((\mathbf{t},x)(\mathbf{t}',x'))=\Omega^{a,b}_Y(\mathbf{t},x(t_1^{-1}t_1')(Y)(x'))=t_1(Y)(x(t_1^{-1}t_1')(Y)(x')).
    \]
    Since $\mathbf{t}\in\Pi^{-1}_Y(a_Y,b_Y)$, it follows from lemma \ref{prop4.11} that this is equal to
    \begin{align*}
        t_1(Y)(x)\ast_{a_Y,b_Y}t_1(Y)((t_1^{-1}t_1')(Y)(x'))&=t_1(Y)(x)\ast_{a_Y,b_Y}t_1'(Y)(x')\\
        &=\Omega^{a,b}_Y(\mathbf{t},x)\ast_{a_Y,b_Y}\Omega^{a,b}_Y(\mathbf{t}',x')
    \end{align*}
    If $\Omega^{a,b}_Y(\mathbf{t},x)=t_1(Y)(x)=0$ then $x=0$ since $t_1(Y)$ is invertible. Then 
    \[
    \ker\Omega^{a,b}_Y=\{(\mathbf{t},0)|\mathbf{t}\in\Pi^{-1}_Y(a_y,b_Y)\}.
    \]
    By definition, for any $(\mathbf{t},0),(\mathbf{t}',0)\in\ker\Omega^{a,b}_Y$, $\Pi_Y(\mathbf{t})=\Pi_Y(\mathbf{t}')$. From remark \ref{anm4.10}, it follows that $\mathbf{t}^{-1}\mathbf{t}'\in i_Y(\mathbf{G}(Y))$. Then
    \[
    (\mathbf{t}(\mathbf{t}^{-1}\mathbf{t}'),(\mathbf{t}^{-1}\mathbf{t}')(Y)(0))=(\mathbf{t}',0)
    \]
    so $(\mathbf{t},0)\sim(\mathbf{t}',0)$. Then $\ker\Omega^{a,b}_Y=0$ and $\Omega^{a,b}_Y$ is injective. Given $x\in\mathcal{C}^{a,b}_Y(Y)$ then any $\mathbf{t}\in\Pi^{-1}_Y(a,b)\neq\varnothing$ satisfy $\Omega^{a,b}_Y(\mathbf{t},t^{-1}_1(Y)(x))=x$, so $\Omega^{a,b}_Y$ is surjective.
\end{proof}

Now we can show the link between twisted octonion algebras and isotopes.

\begin{theorem}\label{thm4.7}
Consider the algebra isomorphisms $\Omega^{a,b}_Y$, defined in lemma \ref{lemma4.12}. The induced map 
\[
\widehat{\Omega}^{a,b}:\mathbf{E}^{a,b}\wedge^\mathbf{G} \mathbf{W}(\mathcal{C})\overset{\sim}{\to}\mathbf{W}(\mathcal{C}^{a,b}).
\]
is a natural isomorphism of fppf-sheaves of algebras.
\end{theorem}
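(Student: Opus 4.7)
The plan is to combine the universal property of fppf sheafification with the torsor structure of $\mathbf{E}^{a,b}$, reducing the statement to Lemma \ref{lemma4.12}, which already establishes the isomorphism at the level of sections whenever $\Pi^{-1}_Y(a_Y,b_Y) \neq \varnothing$.

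First, I would verify that the family $\{\Omega^{a,b}_Y\}_Y$ assembles into a morphism of presheaves of $\mathcal{O}_X$-algebras $\Omega^{a,b}: \mathbf{P}^{a,b}(\mathcal{C}) \to \mathbf{W}(\mathcal{C}^{a,b})$. Naturality in $Y$ is routine: for an $X$-morphism $f: Y' \to Y$, the formation of $\mathcal{C}_Y$, the pullback of sections $a_Y \mapsto a_{Y'}$, the action of the $t_i$, and the twisted multiplication $\ast_{a,b}$ all commute with $f$, so the square corresponding to $f$ commutes. Since $\mathbf{W}(\mathcal{C}^{a,b})$ is representable as an $X$-scheme and hence is an fppf sheaf, the universal property of sheafification produces a unique morphism of sheaves $\widehat{\Omega}^{a,b}: \mathbf{E}^{a,b} \wedge^\mathbf{G} \mathbf{W}(\mathcal{C}) \to \mathbf{W}(\mathcal{C}^{a,b})$ which inherits the property of being an algebra morphism from $\Omega^{a,b}$.

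Next, I would check that $\widehat{\Omega}^{a,b}$ is an isomorphism by working locally in the fppf topology. Since $\mathbf{E}^{a,b} = \Pi^{-1}(a,b)$ is a $\mathbf{G}$-torsor over $X$ (as noted just after Theorem \ref{thm4.10}), there exists an fppf covering $\{Y_i \to X\}_{i \in I}$ such that $\mathbf{E}^{a,b}(Y_i) \neq \varnothing$ for every $i$. For any $Y \in \mathbf{Sch}/Y_i$, pulling back a chosen section of $\mathbf{E}^{a,b}(Y_i)$ along $Y \to Y_i$ yields a section of $\Pi^{-1}_Y(a_Y,b_Y)$, so the hypothesis of Lemma \ref{lemma4.12} holds for every object in $\mathbf{Sch}/Y_i$. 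Consequently $\Omega^{a,b}|_{Y_i}$ is already an isomorphism of presheaves of algebras, and its target is already a sheaf; therefore the sheafification $\widehat{\Omega}^{a,b}|_{Y_i}$ is an isomorphism. As being an isomorphism of fppf sheaves is local for fppf coverings, $\widehat{\Omega}^{a,b}$ is itself an isomorphism.

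The main obstacle is bookkeeping the compatibilities: one must confirm that both the contracted product $\mathbf{E}^{a,b} \wedge^\mathbf{G} \mathbf{W}(\mathcal{C})$ and the sheafification functor commute with restriction to each $\mathbf{Sch}/Y_i$, and that the algebra operations on $\mathbf{P}^{a,b}(\mathcal{C})(Y)$ (addition, scaling, and multiplication, which all depend on a choice of representative $\mathbf{t}$) are genuinely well-defined on equivalence classes and are respected by $\Omega^{a,b}_Y$ uniformly in $Y$. Once these naturality checks are made, the torsor-plus-sheafification reduction to Lemma \ref{lemma4.12} is clean.
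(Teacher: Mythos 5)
Your proposal is correct and follows essentially the same route as the paper: assemble the $\Omega^{a,b}_Y$ into a presheaf morphism, invoke the universal property of sheafification (the target being already a sheaf), and use the $\mathbf{G}$--torsor property of $\mathbf{E}^{a,b}$ to reduce the isomorphism claim to the case $\Pi^{-1}_Y(a_Y,b_Y)\neq\varnothing$ covered by Lemma \ref{lemma4.12}, with naturality checked locally. The only cosmetic difference is that the paper verifies naturality by restricting to affine opens of $X$ and citing the affine result of Alsaody--Gille, whereas you argue it directly from compatibility of all constructions with pullback; both are fine.
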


\begin{remark}
    Naturality means that for any morphism $g:Z\to Y$ of $X$–schemes, the diagram 
    \[
    \begin{tikzcd}
    \mathbf{P}^{a,b}(\mathcal{C})(Z)\arrow[r,"\Omega^{a,b}_Z"]&\mathcal{C}^{a,b}_Z(Z)\\
    \mathbf{P}^{a,b}(\mathcal{C})(Y)\arrow[r,"\Omega^{a,b}_Y"]\arrow[u,"\widetilde{g}:=\mathbf{P}^{a,b}(\mathcal{C})(g)"]&\mathcal{C}^{a,b}_Y(Y)\arrow[u,swap,"\widehat{g}:=\mathcal{C}^{a,b}_Y(g)"]
    \end{tikzcd}
    \]
    commutes. The maps are well-defined by remark \ref{anm4.3}.
\end{remark}

\begin{proof}
    We first prove the isomorphism. It follows from the universal property of sheafification that if the map $\Omega_Y^{a,b}$ on $\mathbf{P}^{a,b}(\mathcal{C})(Y)$ is an algebra isomorphism then $\widehat{\Omega}^{a,b}_Y$ is also an algebra isomorphism. Since $\Omega^{a,b}$ is globally defined, it suffices to check an open cover of $Y$. The (fppf) sheaf $\mathbf{E}^{a,b}\wedge^\mathbf{G}\mathbf{W}(\mathcal{C})$ is a $\mathbf{G}$–torsor so for every $X$–scheme $Y$ there exists an fppf covering $\{Y_i\to Y\}_{i\in I}$ such that $\mathbf{P}^{a,b}(\mathcal{C})(Y_i)\neq\varnothing$. Hence we only need to check $Y$ such that $\mathbf{P}^{a,b}(\mathcal{C})(Y)\neq\varnothing$. Let $Y$ be such a scheme. It follows from lemma \ref{lemma4.12} that $\Omega^{a,b}_Y$ is an algebra isomorphism. 
    
    Now we prove naturality. It suffices to check locally. For any open affine $U\subset X$ and any morphism $h:W\to V$ of $U$–schemes, the diagram
    \[
    \begin{tikzcd}
    \mathbf{P}^{a,b}(\mathcal{C})|_U(W)\arrow[r,"\Omega^{a,b}_W"]&(\mathcal{C}|_U)^{a,b}_W(W)\\
    \mathbf{P}^{a,b}(\mathcal{C})|_U(V)\arrow[r,"\Omega^{a,b}_V"]\arrow[u,"\widetilde{h}"]&(\mathcal{C}|_U)^{a,b}_V(V)\arrow[u,"\widehat{h}"]
    \end{tikzcd}
    \]
    commutes \cite[Theorem 4.6]{OA}. 
\end{proof}

\subsection{The equivalence of isotopes and isometries}
Consider the site $(\mathbf{Sch}/X)_\text{fppf}$. If $\mathbf{G}$ is a sheaf of groups in $(\mathbf{Sch}/X)_\text{fppf}$, we denote the set of isomorphism classes of $\mathbf{G}$–torsors, called the \emph{first cohomology}, by $H^1_\text{fppf}(X,\mathbf{G})$ (see \cite[§2.2]{Gille}).

\begin{lemma}\label{lemma4.9}
    Let $(\mathcal{C},\mathcal{Q_C})$ be an octonion algebra over $X$. We have bijections of pointed sets 
    \begin{align*}
        H^1_\text{fppf}(X,\mathbf{Aut}(\mathcal{C}))&\simeq\{C'|C'\simeq\mathcal{C}\text{ fppf locally}\}\\
        H^1_\text{fppf}(X,\mathbf{O}(\mathcal{Q_C}))&\simeq\{(\mathcal{M},\mathcal{Q_M})|\mathcal{Q_M}\sim\mathcal{Q_C}\text{ fppf locally}\}
    \end{align*}
    with base points $\mathcal{C}$ and $(\mathcal{C},\mathcal{Q_C})$ respectively, which preserves the base points. (Here $C'$ are octonion algebras over $X$ and $(\mathcal{M},\mathcal{Q_M})$ are $\mathcal{O}_X$–module sheaves with a quadratic form.)
\end{lemma}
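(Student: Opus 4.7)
The plan is to establish both bijections as instances of the standard correspondence between fppf torsors and fppf-locally trivial forms. Since the two statements have the same structure, I would prove the first in detail (with $\mathbf{G}:=\mathbf{Aut}(\mathcal{C})$ acting on $\mathbf{W}(\mathcal{C})$ by algebra automorphisms) and then explain what changes for the second (with $\mathbf{G}:=\mathbf{O}(\mathcal{Q}_\mathcal{C})$ acting on $\mathbf{W}(\mathcal{C})$ by isometries).

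For the forward map $\Phi$, given an octonion algebra $C'$ with $C'\simeq\mathcal{C}$ fppf locally, I would define the sheaf
\[
\mathbf{Isom}_{\mathrm{alg}}(\mathcal{C},C')(Y)=\{\mathcal{O}_Y\text{-algebra isomorphisms }\mathcal{C}_Y\xrightarrow{\sim}C'_Y\}
\]
on $(\mathbf{Sch}/X)_{\text{fppf}}$. The hypothesis guarantees fppf-local non-emptiness, and $\mathbf{G}$ acts freely and transitively on fibres by precomposition; hence $\mathbf{Isom}_{\mathrm{alg}}(\mathcal{C},C')$ is a $\mathbf{G}$-torsor. Isomorphic twisted forms yield isomorphic torsors, so $\Phi$ is well defined on isomorphism classes.

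For the inverse map $\Psi$, given a $\mathbf{G}$-torsor $\mathbf{E}$, I would associate the fppf sheafification of the presheaf
\[
Y\mapsto(\mathbf{E}(Y)\times\mathbf{W}(\mathcal{C})(Y))/\sim,\qquad (e,x)\sim(eg,g^{-1}(Y)(x)),
\]
equipped with the algebra structure inherited componentwise from $\mathbf{W}(\mathcal{C})$ (well defined because $\mathbf{G}$ acts by algebra automorphisms). This is exactly the twisting construction used in the proof of Theorem~\ref{thm4.7} for $\mathbf{E}^{a,b}$. Any fppf covering $\{Y_i\to X\}$ trivializing $\mathbf{E}$ trivializes the twist, so $\Psi(\mathbf{E})$ lies in the target set.

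It remains to show that $\Phi$ and $\Psi$ are mutually inverse and that the trivial torsor $\mathbf{G}$ maps to $\mathcal{C}$; both are standard descent arguments. For one direction, the evaluation map
$\mathbf{Isom}_{\mathrm{alg}}(\mathcal{C},C')\wedge^{\mathbf{G}}\mathbf{W}(\mathcal{C})\to\mathbf{W}(C')$ sending $(\varphi,x)$ to $\varphi(Y)(x)$ is an fppf-local isomorphism, hence an isomorphism of sheaves of algebras. For the other direction, sending $\mathbf{t}\in\mathbf{E}(Y)$ to the algebra isomorphism $x\mapsto[(\mathbf{t},x)]$ gives a $\mathbf{G}$-equivariant isomorphism $\mathbf{E}\xrightarrow{\sim}\mathbf{Isom}_{\mathrm{alg}}(\mathcal{C},\Psi(\mathbf{E}))$. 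Preservation of base points is immediate, since the class of $(g,x)$ equals that of $(1,g(Y)(x))$. The main obstacle, in each case, is checking that the algebra (respectively module-with-form) structure on the sheafified contracted product is well defined and agrees locally with that of $\mathcal{C}$; this reduces, on an fppf trivialization, to the fact that elements of $\mathbf{G}$ preserve exactly the structure being twisted. The second bijection follows by running the same argument with $\mathbf{Aut}(\mathcal{C})$ replaced by $\mathbf{O}(\mathcal{Q}_\mathcal{C})$, algebra isomorphisms by isometries, and $\mathbf{W}(\mathcal{C})$ viewed merely as an $\mathcal{O}_X$-module carrying $\mathcal{Q}_\mathcal{C}$.
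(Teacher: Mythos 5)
Your proposal is correct and follows essentially the same route as the paper: the paper's proof simply exhibits the mutually inverse maps $E\mapsto E\wedge^{\mathbf{Aut}(\mathcal{C})}\mathcal{C}$ and $C'\mapsto\mathbf{Isom}(\mathcal{C},C')$ and declares the orthogonal case analogous, which is exactly your construction. Your version just spells out the torsor property of the Isom sheaf, the well-definedness of the twisted algebra structure, and the mutual inverses, all of which the paper leaves implicit.
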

\begin{proof}
    We have mutually inverse functions
    \begin{align*}
        H^1_\text{fppf}(X,\mathbf{Aut}(\mathcal{C}))&\leftrightarrow\{C'|C'\simeq\mathcal{C}\text{ fppf locally}\}\\
        E&\mapsto E\wedge^{\mathbf{Aut}(\mathcal{C})}\mathcal{C}\\
        \mathbf{Isom}(\mathcal{C},C')&\mapsfrom C'
    \end{align*}
    where $E\wedge^{\mathbf{Aut}(\mathcal{C})}\mathcal{C}$ is the sheafification of
    \[
    \mathbf{E}(\mathcal{C})(Y)=(E(Y)\times \mathcal{C}_Y(Y))/\sim
    \]
    where
    \[
    (u,x)\sim(u',x')\iff\exists g\in\mathbf{Aut}(\mathcal{C})(Y):(ug,x)=(u',g(Y)(x')),
    \]
    and $\mathbf{Isom}(\mathcal{C},C')$ is the set of isomorphisms from $\mathcal{C}$ to $C$'. The other bijection is similar.
\end{proof}

Recall the scheme $\boldsymbol{\mu}_2$. We can embed this into $\mathbf{RT}(\mathcal{C})$ by
\begin{align*}
    \boldsymbol{\mu}_2(Y)&\hookrightarrow\mathbf{RT}(\mathcal{C})(Y)\\
    \eta&\mapsto(1,\eta,\eta)
\end{align*}
We also have an embedding into $\mathbf{S}_\mathcal{Q_C}^2$:
\begin{align*}
    \boldsymbol{\mu}_2(Y)&\hookrightarrow\mathbf{S}_\mathcal{Q_C}^2(Y)\\
    \eta&\mapsto(\eta\cdot1_\mathcal{C},\eta\cdot1_\mathcal{C})
\end{align*}
We can now mod out $\boldsymbol{\mu}_2$ from the mapping $\Pi$, and get
\[
\Pi_+:\mathbf{RT}(\mathcal{C})/\boldsymbol{\mu}_2\to\mathbf{S}_\mathcal{Q_C}^2/\boldsymbol{\mu}_2
\]
Let $\{U_i\}$ be an affine cover of $X$ and $f_1:\mathbf{RT}(\mathcal{C})\to\mathbf{SO}(\mathcal{Q_C})$ be the projection onto the first coordinate. The map $f_1$ induces an $U_i$–isomorphism 
\[
\big(\mathbf{RT}(\mathcal{C})/\boldsymbol{\mu}_2\big)\big|_{U_i}\simeq\mathbf{RT}(\mathcal{C})|_{U_i}\big/\boldsymbol{\mu}_2|_{U_i}\overset{\sim}{\longrightarrow}\mathbf{SO}(\mathcal{Q_C})|_{U_i}
\]
\cite[p. 893]{OA}, where the first isomorphism comes from a similar argument as for the quotient scheme in theorem \ref{thm4.10}. It follows that we have an $X$–isomorphism $\mathbf{RT}(\mathcal{C})/\boldsymbol{\mu}_2\overset{\sim}{\longrightarrow}\mathbf{SO}(\mathcal{Q_C})$, so we get a commutative diagram
\[
\begin{tikzcd}
    \mathbf{RT}(\mathcal{C})\arrow[r,"f_1"]\arrow[d,"\Pi"]&\mathbf{SO}(\mathcal{Q_C})\arrow[d,"\Pi_+"]\\
    \mathbf{S}_\mathcal{Q_C}^2\arrow[r,"\rho"]&\mathbf{S}_\mathcal{Q_C}^2/\boldsymbol{\mu}_2
\end{tikzcd}
\label{cartdiag}\tag{$\dag$}
\]
where $\Pi$ and $\Pi_+$ are $\mathbf{Aut}(\mathcal{C})$–torsors.
\begin{lemma}
    The diagram $($\ref{cartdiag}$)$ is a Cartesian square.
\end{lemma}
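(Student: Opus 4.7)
The plan is to verify that the natural comparison morphism
\[
\Psi: \mathbf{RT}(\mathcal{C}) \longrightarrow \mathbf{SO}(\mathcal{Q_C}) \times_{\mathbf{S}_\mathcal{Q_C}^2/\boldsymbol{\mu}_2} \mathbf{S}_\mathcal{Q_C}^2,
\]
induced by the commutativity of the square and the universal property of the fppf-sheaf pullback, is an isomorphism. Being an isomorphism of sheaves is local on $X$, and both the fiber product and the quotient sheaf commute with restriction, so it suffices to check that $\Psi|_{U_i}$ is an isomorphism for some affine cover $\{U_i\}$ of $X$. First I would establish the compatibility with restriction: for the fiber product this is formal, while for the quotient one has $(\mathbf{S}_\mathcal{Q_C}^2/\boldsymbol{\mu}_2)|_{U_i} \simeq (\mathbf{S}_\mathcal{Q_C}|_{U_i})^2/\boldsymbol{\mu}_2|_{U_i}$ by the same sheafification argument used in the proof of theorem \ref{thm4.10}(2). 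After these identifications, the restricted square is precisely the square of $U_i$-group schemes studied in \cite[p.~893]{OA}, where it is verified to be Cartesian in the affine setting; this yields the global Cartesianness.

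A more conceptual alternative is to observe that the preceding discussion already says $\Pi$ is an $\mathbf{Aut}(\mathcal{C})$-torsor over $\mathbf{S}_\mathcal{Q_C}^2$, and the same reasoning combined with the isomorphism $\mathbf{RT}(\mathcal{C})/\boldsymbol{\mu}_2 \simeq \mathbf{SO}(\mathcal{Q_C})$ shows that $\Pi_+$ is an $\mathbf{Aut}(\mathcal{C})$-torsor over $\mathbf{S}_\mathcal{Q_C}^2/\boldsymbol{\mu}_2$. Pulling back along $\rho$ gives an $\mathbf{Aut}(\mathcal{C})$-torsor over $\mathbf{S}_\mathcal{Q_C}^2$, and $\Psi$ is by construction an $\mathbf{Aut}(\mathcal{C})$-equivariant map between two torsors for the same group sheaf over the same base. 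Any such map is automatically an isomorphism, so $\Psi$ is an isomorphism and the square is Cartesian.

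The main obstacle in either approach is bookkeeping. In the local route, one must justify that $(\mathbf{S}_\mathcal{Q_C}^2/\boldsymbol{\mu}_2)|_{U_i}$ is itself the corresponding quotient sheaf and that the cited Cartesian square in \cite{OA} matches ours under these identifications. In the torsor route, the subtle point is that the $\mathbf{Aut}(\mathcal{C})$-action on $\mathbf{SO}(\mathcal{Q_C})$ inherited through the quotient by $\boldsymbol{\mu}_2$ matches the one coming from $i$ on the fiber-product side; once this is checked, the conclusion is immediate.
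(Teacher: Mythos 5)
Your first route is essentially the paper's own argument in different packaging: the paper checks the universal property of the pullback directly by producing the mediating morphism $\lambda_i$ over each $U_i$ from the affine case in \cite[p.~893]{OA} and then gluing via uniqueness on overlaps, whereas you check that the canonical comparison map $\Psi$ into the fibre product is an isomorphism after restriction to the $U_i$; these are equivalent formulations of the same localization strategy, and your remark that one must identify $(\mathbf{S}_\mathcal{Q_C}^2/\boldsymbol{\mu}_2)|_{U_i}$ with the quotient of the restrictions (by the sheafification argument of theorem \ref{thm4.10}(2)) is exactly the bookkeeping the paper elides. Your second route is genuinely different and more conceptual: since $f_1$ is a group homomorphism with $f_1\circ i=\mathrm{id}$ and $\Pi$ is invariant under the right $i(\mathbf{Aut}(\mathcal{C}))$-action (as $g(1)=1$), the map $\Psi=(f_1,\Pi)$ is indeed an equivariant morphism of $\mathbf{Aut}(\mathcal{C})$-torsors over $\mathbf{S}_\mathcal{Q_C}^2$, hence an isomorphism, with no gluing needed. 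What this buys is brevity and independence from the affine computation; what it costs is a logical dependency you should make explicit: it uses that $\Pi_+$ is an $\mathbf{Aut}(\mathcal{C})$-torsor over $\mathbf{S}_\mathcal{Q_C}^2/\boldsymbol{\mu}_2$, which the paper asserts in the sentence introducing the diagram but does not prove there. If that assertion is itself obtained by descending the torsor $\Pi$ along $\rho$ --- i.e.\ from the Cartesian square --- the argument becomes circular, so you would need to establish the torsor property of $\Pi_+$ independently (for instance from the transitivity statement of lemma \ref{lemma4.17} together with the stabilizer computation, or again by localizing to affines). With that caveat addressed, both of your routes are sound.
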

\begin{proof}
    Let $\{U_i\}_{i\in I}$ be an affine open cover of $X$. For each $i\in I$, the restricted diagram
    \[
    \begin{tikzcd}
        \mathbf{RT}(\mathcal{C})|_{U_i}\arrow[r,"f_1|_{U_i}"]\arrow[d,"\Pi|_{U_i}"]&\mathbf{SO}(\mathcal{Q_C})|_{U_i}\arrow[d,"\Pi_+|_{U_i}"]\\
        \mathbf{S}_\mathcal{Q_C}^2|_{U_i}\arrow[r,"\rho|_{U_i}"]&(\mathbf{S}_\mathcal{Q_C}^2/\boldsymbol{\mu}_2)|_{U_i}
    \end{tikzcd}
    \]
    is a Cartesian square \cite[p. 893]{OA}. Let $Y\in\mathbf{Sch}/X$, and let $Y\overset{g}{\to}\mathbf{SO}(\mathcal{Q_C})$ and $Y\overset{h}{\to}\mathbf{S}_\mathcal{Q_C}^2$ be $X$–morphisms such that $\Pi_+g=\rho h$. Then
    \[
    (\Pi_+|_{U_i})(g|_{U_i})=(\Pi_+g)|_{U_i}=(\rho h)|_{U_i}=(\rho|_{U_i})(h|_{U_i}),
    \]
    so for all $i\in I$ there exists a morphism $\lambda_i:Y|_{U_i}\to\mathbf{RT}(\mathcal{C})|_{U_i}$ such that
    \[
    g|_{U_i}=f_1|_{U_i}\lambda_i\text{ and } h|_{U_i}=\Pi|_{U_i}\lambda_i.
    \]
    On the intersection $U_i\cap U_j$, both $\lambda_i$ and $\lambda_j$ make the restricted diagram commute. By the uniqueness in a Cartesian square, we have
    \[
    \lambda_i|_{U_i\cap U_j}=\lambda_j|_{U_i\cap U_j}.
    \]
    Then there exists a $\lambda:Y\to\mathbf{RT}(\mathcal{C})$ such that $\lambda|_{U_i}=\lambda_i$ \cite[Lemma 6.33.1]{stacks}. In particular, this $\lambda$ satisfies
    \[
    g=f_1\lambda\text{ and }h=\Pi\lambda
    \]
    and we are done.
\end{proof}
We have two projections $p_i:\mathbf{S}_\mathcal{Q_C}^2/\boldsymbol{\mu}_2\to\mathbf{S}_\mathcal{Q_C}/\boldsymbol{\mu}_2$, $i=1,2$, defined by projection onto the first and second coordinate respectively. We get two actions of $\mathbf{RT}(\mathcal{C})$ on $\mathbf{S}_\mathcal{Q_C}$:
\[
\big((t_1,t_2,t_3),u\big)\mapsto t_3(u),\quad\text{and }\quad\big((t_1,t_2,t_3),v\big)\mapsto t_2(v).
\]
This induces two actions of $\mathbf{RT}(\mathcal{C})/\boldsymbol{\mu}_2\simeq\mathbf{SO}(\mathcal{Q_C})$ on $\mathbf{S}_\mathcal{Q_C}/\boldsymbol{\mu}_2$, which we denote by $(g,x)\mapsto g\bullet_ix$, $i=1,2$.

\begin{lemma}\label{lemma4.17}
    Both actions of $\mathbf{SO}(\mathcal{Q_C})(X)$ on $(\mathbf{S}_{\mathcal{Q_C}}/\boldsymbol{\mu}_2)(X)$ are transitive.
\end{lemma}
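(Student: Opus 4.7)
The plan is to reduce transitivity to the following single statement: for every $u\in(\mathbf{S}_{\mathcal{Q_C}}/\boldsymbol{\mu}_2)(X)$ there exists $g\in\mathbf{SO}(\mathcal{Q_C})(X)$ with $g\bullet_1[1]=u$. The $\bullet_2$ case follows from exactly the same construction, and once both are known, transitivity in the usual sense is obtained by taking $g_v g_u^{-1}$ to send $u\mapsto v$. So I would focus entirely on constructing such a $g$ explicitly.

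The key building block is the triple $(B_a,R_{\overline{a}},L_{\overline{a}})\in\mathbf{RT}(\mathcal{C})(Y)$ from Example~\ref{ex5.2}, valid for any $a\in\mathbf{S}_{\mathcal{Q_C}}(Y)$. Applying $\Pi_Y$ to this triple gives $(L_{\overline{a}}(1),R_{\overline{a}}(1))=(\overline{a},\overline{a})$, so under the induced actions $B_a\bullet_1[1]=[\overline{a}]$ and symmetrically $B_a\bullet_2[1]=[\overline{a}]$. Crucially, $B_a$ is \emph{quadratic} in $a$: for any $\eta\in\boldsymbol{\mu}_2(Y)$ we have $B_{\eta a}=L_{\eta a}R_{\eta a}=\eta^2 B_a=B_a$. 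Moreover, the conjugation $\kappa_{\mathcal{C}}$ is $\mathcal{O}_X$-linear, $\boldsymbol{\mu}_2$-equivariant, and preserves $\mathcal{Q_C}$, so it restricts to $\mathbf{S}_{\mathcal{Q_C}}$ and descends to an involution $\overline{\,\cdot\,}$ of $\mathbf{S}_{\mathcal{Q_C}}/\boldsymbol{\mu}_2$.

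Given $u\in(\mathbf{S}_{\mathcal{Q_C}}/\boldsymbol{\mu}_2)(X)$, I would set $w:=\overline{u}$ and choose, from the definition of the sheafification, an fppf covering $\{X_i\to X\}_{i\in I}$ together with lifts $a_i\in\mathbf{S}_{\mathcal{Q_C}}(X_i)$ of $w|_{X_i}$. On an overlap $X_{ij}:=X_i\times_X X_j$, the map $\mathbf{S}_{\mathcal{Q_C}}\to\mathbf{S}_{\mathcal{Q_C}}/\boldsymbol{\mu}_2$ is a $\boldsymbol{\mu}_2$-torsor, so the two restrictions $a_i|_{X_{ij}}$ and $a_j|_{X_{ij}}$ satisfy $a_i|_{X_{ij}}=\eta_{ij}\cdot a_j|_{X_{ij}}$ for a unique $\eta_{ij}\in\boldsymbol{\mu}_2(X_{ij})$. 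The invariance $B_{\eta a}=B_a$ then yields $B_{a_i}|_{X_{ij}}=B_{a_j}|_{X_{ij}}$, and since $\mathbf{SO}(\mathcal{Q_C})$ is an fppf sheaf (being representable), the local sections $B_{a_i}\in\mathbf{SO}(\mathcal{Q_C})(X_i)$ glue to a unique global $g\in\mathbf{SO}(\mathcal{Q_C})(X)$.

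Finally, I would verify $g\bullet_1[1]=u$ locally: on each $X_i$ one has $g|_{X_i}\bullet_1[1]=B_{a_i}\bullet_1[1]=[\overline{a_i}]=\overline{w}|_{X_i}=u|_{X_i}$, and since these values are compatible on overlaps they patch to $g\bullet_1[1]=u$ in $(\mathbf{S}_{\mathcal{Q_C}}/\boldsymbol{\mu}_2)(X)$; the same computation with $t_2=R_{\overline{a}}$ handles $\bullet_2$. The main obstacle throughout is the gluing step, and what makes it succeed is precisely the quadratic dependence $B_{\eta a}=B_a$, which trivializes the $\boldsymbol{\mu}_2$-obstruction that would otherwise prevent constructing a global element of $\mathbf{SO}(\mathcal{Q_C})(X)$ from only local lifts (for instance, a naive construction via $L_a$ does not glue, since $L_{\eta a}=\eta L_a$).
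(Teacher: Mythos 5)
Your proof is correct and takes essentially the same route as the paper: both arguments hinge on the $\boldsymbol{\mu}_2$--invariance $B_{\eta a}=B_a$, realized through the triple $(B_a,R_{\overline{a}},L_{\overline{a}})\in\mathbf{RT}(\mathcal{C})$ of Example~\ref{ex5.2}, to descend local sections over an fppf cover to a global element of $\mathbf{SO}(\mathcal{Q}_\mathcal{C})(X)$ carrying $[1]$ to a prescribed class, and then obtain transitivity by composing two such elements. The only cosmetic difference is that the paper packages the descent as a section $h$ of the orbit map satisfying $h(x)\bullet_j x=[1]$ and then inverts, whereas you precompose with the conjugation to hit $u$ directly; the resulting elements agree since $B_a^{-1}=B_{\overline{a}}$ on the unit sphere.
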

\begin{proof}
    First we will prove that the orbit map $\mathbf{SO}(\mathcal{Q_C})\to\mathbf{S}_\mathcal{Q_C}/\boldsymbol{\mu}_2$, $g\mapsto g\bullet_j[1]$ admits a splitting for each $j=1,2$. Let $j=2$. We have a map $\mathbf{S}_\mathcal{Q_C}\to\mathbf{SO}(\mathcal{Q_C})$ defined by
    \begin{align*}
        \mathbf{S}_\mathcal{Q_C}(Y)&\to\mathbf{SO}(\mathcal{Q_C})(Y)\\
        a&\mapsto B_a
    \end{align*}
    The map is $\boldsymbol{\mu}_2$–invariant, since for any $X$–scheme $Y$ and any $\eta\in\boldsymbol{\mu}_2(Y)$ 
    \[
    B_{\eta a}(Y)(x)=\eta a\cdot x\cdot\eta a=\eta^2\cdot a\cdot x\cdot a=a\cdot x\cdot a=B_a(V)(x),
    \]
    so it induces a map $h:\mathbf{S}_\mathcal{Q_C}/\boldsymbol{\mu}_2\to\mathbf{SO}(\mathcal{Q_C})$. Let $x\in(\mathbf{S}_\mathcal{Q_C}/\boldsymbol{\mu}_2)(X)$ and $\{U_i\to X\}$ be an fppf covering. Then for every $i$, $x_{U_i}$ lifts to some $a_i\in\mathbf{S}_\mathcal{Q_C}(U_i)$. Then $h(X)(x)_{U_i}=B_{a_i}(U_i)$, which lifts to $(B_{a_i},R_{\overline{a_i}},L_{\overline{a_i}})=:\mathbf{t}\in\mathbf{RT}(\mathcal{C})(U_i)$ (see example \ref{ex5.2}). We have
    \[
    h(X)(x)_{U_i}\bullet_2x_{U_i}=p_1(\mathbf{t}.x_{U_i})=R_{\overline{a_i}}(U_i)[a_i]=[R_{\overline{a_i}}(U_i)(a_i)]=[1],
    \]
    and since it holds for each $U_i$ it follows that 
    \[
    h(X)(x)^{-1}\bullet_2[1]=x.
    \]
    So $x\mapsto h(X)(x)^{-1}$ is a section of the orbit map. The case $j=1$ is done similarly, but with the first projection $p_1$. Now, let $x,y\in \mathbf{S}_\mathcal{Q_C}/\boldsymbol{\mu}_2$. We have $h(X)(y)^{-1}\cdot h(X)(x)\in\mathbf{SO}(\mathcal{Q_C})(X)$, so
    \[
    (h(X)(y)^{-1}\cdot h(X)(x))\bullet_jx=h(X)(y)^{-1}\bullet_j(h(X)(x)\bullet_j x)=h(X)(y)^{-1}\bullet_j[1]=y.
    \]
    Hence the actions are transitive.
\end{proof}

Now we have everything we need to state and prove our last theorem.

\begin{theorem}\label{thm4.17}
    Let $\mathbf{G}=\mathbf{Aut}(\mathcal{C})$ and let
    \begin{align*}
        S_1&=\ker\big(H^1_\text{fppf}(X,\mathbf{G})\to H^1_\text{fppf}(X,\mathbf{RT}(\mathcal{C}))\big),\\
        S_2&=\ker\big(H^1_\text{fppf}(X,\mathbf{G})\to H^1_\text{fppf}(X,\mathbf{O}(\mathcal{Q}_\mathcal{C}))\big),
    \end{align*}
    where the maps are induced by
    \begin{align*}
        \mathbf{Aut}(\mathcal{C})&\to\mathbf{RT}(\mathcal{C}),\qquad\qquad\mathbf{Aut}(\mathcal{C})\to\mathbf{O}(\mathcal{Q}_\mathcal{C})\\
        \varphi&\mapsto(\varphi,\varphi,\varphi)\qquad\qquad\qquad\psi\mapsto\psi
    \end{align*}
    Then $S_1=S_2$.
\end{theorem}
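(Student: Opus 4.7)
The plan is to prove the two inclusions separately. For $S_1 \subseteq S_2$, the idea is pure functoriality: the composition $\mathbf{G} \xrightarrow{i} \mathbf{RT}(\mathcal{C}) \xrightarrow{f_1} \mathbf{SO}(\mathcal{Q}_\mathcal{C}) \hookrightarrow \mathbf{O}(\mathcal{Q}_\mathcal{C})$, where $i(\varphi) = (\varphi, \varphi, \varphi)$ and $f_1$ is the first-coordinate projection, recovers the natural inclusion $\mathbf{G} \hookrightarrow \mathbf{O}(\mathcal{Q}_\mathcal{C})$ defining the map in $S_2$. Applying $H^1_{\text{fppf}}(X, -)$ to this commutative triangle of $X$-group schemes shows that any class vanishing in $H^1_{\text{fppf}}(X, \mathbf{RT}(\mathcal{C}))$ already vanishes in $H^1_{\text{fppf}}(X, \mathbf{O}(\mathcal{Q}_\mathcal{C}))$.

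For the reverse $S_2 \subseteq S_1$, I would translate the hypothesis via Lemma \ref{lemma4.9}: a class $[E] \in S_2$ corresponds to an octonion algebra $C' := E \wedge^\mathbf{G} \mathcal{C}$ (locally isomorphic to $\mathcal{C}$) with $\mathcal{Q}_{C'} \sim \mathcal{Q}_\mathcal{C}$ globally. By Theorem \ref{thm4.10}(2), the image of $[E]$ in $H^1_{\text{fppf}}(X, \mathbf{RT}(\mathcal{C}))$ is trivial if and only if $E \simeq \Pi^{-1}(a, b)$ for some $(a, b) \in \mathbf{S}_{\mathcal{Q}_\mathcal{C}}^2(X)$, and by Theorem \ref{thm4.7} this is in turn equivalent to $C' \simeq \mathcal{C}^{a,b}$ as octonion algebras over $X$. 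The task thus reduces to producing such a pair $(a, b)$ from an isometry $\varphi: (\mathcal{C}, \mathcal{Q}_\mathcal{C}) \xrightarrow{\sim} (C', \mathcal{Q}_{C'})$.

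For this construction I would combine the Cartesian square $(\ref{cartdiag})$ with Lemma \ref{lemma4.17}. The isometry $\varphi$ provides, after restricting to the subgroup $\mathbf{SO}(\mathcal{Q}_\mathcal{C})$, a global section of the $\mathbf{SO}(\mathcal{Q}_\mathcal{C})$-pushout whose image under $\Pi_+$ is a class $[(u, v)] \in (\mathbf{S}_{\mathcal{Q}_\mathcal{C}}^2/\boldsymbol{\mu}_2)(X)$; the transitivity of both $\mathbf{SO}(\mathcal{Q}_\mathcal{C})(X)$-actions $\bullet_1, \bullet_2$ on $(\mathbf{S}_{\mathcal{Q}_\mathcal{C}}/\boldsymbol{\mu}_2)(X)$ from Lemma \ref{lemma4.17} then lifts $[(u, v)]$ to a genuine pair $(a, b) \in \mathbf{S}_{\mathcal{Q}_\mathcal{C}}^2(X)$, and the Cartesian property of $(\ref{cartdiag})$ assembles these data into a global element of $\mathbf{RT}(\mathcal{C})(X)$ trivializing the $\mathbf{RT}(\mathcal{C})$-torsor $E \wedge^\mathbf{G} \mathbf{RT}(\mathcal{C})$.

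The main obstacle will be the $S_2 \subseteq S_1$ step: bridging the gap between an $\mathbf{O}$-isometry (which exists by hypothesis) and the compatible $(\mathbf{SO}(\mathcal{Q}_\mathcal{C}), \mathbf{S}_{\mathcal{Q}_\mathcal{C}}^2)$-datum required to enter the Cartesian square $(\ref{cartdiag})$. This is precisely where Lemma \ref{lemma4.17} carries the load, ensuring that orbit elements lift to honest global sections of $\mathbf{S}_{\mathcal{Q}_\mathcal{C}}$ rather than only fppf-locally from the quotient $\mathbf{S}_{\mathcal{Q}_\mathcal{C}}/\boldsymbol{\mu}_2$; once this lift is in hand, Theorem \ref{thm4.7} closes the loop by identifying $C'$ with $\mathcal{C}^{a,b}$.
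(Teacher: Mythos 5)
Your first inclusion and your overall strategy for $S_2\subseteq S_1$ (reduce to producing a global pair $(a,b)\in\mathbf{S}_\mathcal{Q_C}^2(X)$, then conclude via the Cartesian square (\ref{cartdiag}) and theorem \ref{thm4.7}) match the paper. But the two places where you locate the difficulty are both resolved by tools other than the ones you name, and as written the argument has genuine gaps. First, the passage from the $\mathbf{O}(\mathcal{Q_C})$--isometry $\varphi$ (which is all the hypothesis gives you) to an element of $\mathbf{SO}(\mathcal{Q_C})(X)$ is not a matter of ``restricting to the subgroup'': $\varphi$ need not be special. The paper fixes this by showing that on each connected component $\varphi$ is either everywhere special or everywhere improper, and by composing with the canonical involution $\kappa$ (an improper isometry) on the improper components to obtain $\psi\in\mathbf{SO}(\mathcal{Q_C})$ representing the same class. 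Some such modification is needed before you can invoke the orbit description of the kernel into $H^1(X,\mathbf{SO}(\mathcal{Q_C}))$, and nothing in your outline supplies it.

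Second, and more seriously, Lemma \ref{lemma4.17} cannot ``lift $[(u,v)]$ to a genuine pair $(a,b)\in\mathbf{S}_\mathcal{Q_C}^2(X)$.'' That lemma only asserts transitivity of the $\mathbf{SO}(\mathcal{Q_C})(X)$--actions on $(\mathbf{S}_\mathcal{Q_C}/\boldsymbol{\mu}_2)(X)$; it says nothing about sections of $\mathbf{S}_\mathcal{Q_C}\to\mathbf{S}_\mathcal{Q_C}/\boldsymbol{\mu}_2$. The obstruction to lifting a class $x\in(\mathbf{S}_\mathcal{Q_C}^2/\boldsymbol{\mu}_2)(X)$ is the $\boldsymbol{\mu}_2$--torsor $\varrho_1^{-1}(x)$, which is in general nontrivial, and transitivity of a group action on the quotient does not kill a class in $H^1_\text{fppf}(X,\boldsymbol{\mu}_2)$. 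The paper uses the transitivity only to replace $x$ within its orbit (hence without changing $\phi(x)=[C']$) so that its first component is $[1]$; the actual lift then comes from a separate argument: the first--coordinate projection induces a morphism of $\boldsymbol{\mu}_2$--torsors $\varrho_1^{-1}(x)\to\varrho_2^{-1}([1])$, the target is trivial because $1\in\mathbf{S}_\mathcal{Q_C}(X)$ lifts $[1]$, and a morphism of torsors is an isomorphism, so $\varrho_1^{-1}(x)$ has a global section $(a,b)$. Without this step your chain breaks exactly at the point you flag as ``where Lemma \ref{lemma4.17} carries the load'': the lemma does not carry it.
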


\begin{remark}
    The maps are well–defined by remark \ref{remark3.5} and theorem \ref{thm4.10}(1).
\end{remark}

\begin{proof}[Proof of theorem \ref{thm4.17}]
    Parts of this proof is inspired by the proof of theorem 6.6 in \cite{OA}. Notice that $S_1$ classifies the octonion algebras over $X$ that are isomorphic to $\mathcal{C}^{a,b}$. We have homomorphisms 
    \begin{align*}
        i_1:\mathbf{Aut}(\mathcal{C})&\to\mathbf{RT}(\mathcal{C}),\qquad\qquad\qquad i_2:\mathbf{RT}(\mathcal{C})\to\mathbf{SO}(\mathcal{Q}_\mathcal{C})\\
        \varphi&\mapsto(\varphi,\varphi,\varphi)\qquad\qquad\qquad(\psi_{1},\psi_{2},\psi_{3})\mapsto\psi_{1}\\
        i_3:\mathbf{SO}(\mathcal{Q}_\mathcal{C})&\to\mathbf{O}(\mathcal{Q}_\mathcal{C}),\qquad\quad i_4:\mathbf{Aut}(\mathcal{C})\to\mathbf{O}(\mathcal{Q}_\mathcal{C})\\
        \rho&\mapsto\rho\qquad\qquad\qquad\qquad\qquad\varphi\mapsto\varphi\\
    \end{align*}
    We see that $i_4=i_3\circ i_2\circ i_1$. $H^1(X,\_)$ is a covariant functor so $i_4^\ast=i_3^\ast\circ i_2^\ast\circ i_1^\ast$, where $i_j^\ast:=H^1_\text{fppf}(X,i_j)$. From this it follows that $S_1\subseteq S_2$. 
    
    Let $[C']\in S_2$. Then there exists $\varphi:\mathcal{C}\to C'$ such that $\varphi\in\mathbf{O}(\mathcal{Q_C})$. For any affine connected subset $U\subset X$, either $\varphi|_U\in\mathbf{SO}(\mathcal{Q_C})$ or $\varphi|_U\in\mathbf{O}(\mathcal{Q_C})\setminus\mathbf{SO}(\mathcal{Q_C})$ \cite[Theorem 6.6]{OA}. Let $X_i$ be the connected components of $X$, and $\{U_{ij}\}$ an affine connected cover of $X_i$. If for some $j_1,j_2$ we have $U_{ij_1}\cap U_{ij_2}\neq\varnothing$ and
    \[
    \varphi|_{U_{ij_1}}\in\mathbf{SO}(\mathcal{Q_C}),\text{ and }\varphi|_{U_{ij_2}}\in\mathbf{O}(\mathcal{Q_C})\setminus\mathbf{SO}(\mathcal{Q_C}),
    \]
    then there exists an open affine $V\subset U_{ij_1}\cap U_{ij_2}$ such that
    \[
    \varphi|_V\in\mathbf{SO}(\mathcal{Q_C}),\text{ and }\varphi|_V\in\mathbf{O}(\mathcal{Q_C})\setminus\mathbf{SO}(\mathcal{Q_C}),
    \]
    a contradiction. Hence, for every $i$, either
    \[
    \varphi|_{U_{ij}}\in\mathbf{SO}(\mathcal{Q_C})\quad\forall j
    \]
    or
    \[
    \varphi|_{U_{ij}}\in\mathbf{O}(\mathcal{Q_C})\setminus\mathbf{SO}(\mathcal{Q_C})\quad\forall j.
    \]
    Then there exists a morphism $\psi:\mathcal{C}\to C'$ such that
    \[
    \psi|_{U_{ij}}=\left\{
    \begin{array}{cc}
         \varphi|_{U_{ij}},&\text{if }\varphi|_{U_{ij}}\in\mathbf{SO}(\mathcal{Q_C})\\
         \kappa|_{U_{ij}}\circ\varphi|_{U_{ij}},&\text{if }\varphi|_{U_{ij}}\in\mathbf{O}(\mathcal{Q_C})\setminus\mathbf{SO}(\mathcal{Q_C})
    \end{array}
    \right.
    \]
    Then $\psi\in\mathbf{SO}(\mathcal{Q_C})$. There exists a bijection \cite[Proposition 2.4.3]{Gille}
    \begin{align*}
        \phi:\mathbf{SO}(\mathcal{Q_C})(X)\backslash(\mathbf{S}_\mathcal{Q_C}^2/\boldsymbol{\mu}_2)(X)&\overset{\sim}{\longrightarrow}S_2\\
        x&\longmapsto\Pi_+^{-1}(x)\wedge^\mathbf{G}\mathbf{W}(\mathcal{C}),
    \end{align*}
    where $\mathbf{SO}(\mathcal{Q_C})(X)\backslash(\mathbf{S}_\mathcal{Q_C}^2/\boldsymbol{\mu}_2)(X)$ is the set of orbits of the $\mathbf{SO}(\mathcal{Q_C})(X)$–action. Each orbit can be represented by an element in $(\mathbf{S}_\mathcal{Q_C}^2/\boldsymbol{\mu}_2)(X)$, so let $x\in(\mathbf{S}_\mathcal{Q_C}^2/\boldsymbol{\mu}_2)(X)$ such that $\phi(x)=[C']$ and let $x_1$ be its projection onto the first copy of $(\mathbf{S}_\mathcal{Q_C}/\boldsymbol{\mu}_2)(X)$. By Lemma \ref{lemma4.17}, we may assume $x_1=[1]\in(\mathbf{S}_\mathcal{Q_C}/\boldsymbol{\mu}_2)(X)$. Consider the commutative diagram
    \[
    \begin{tikzcd}
        \mathbf{S}_\mathcal{Q_C}^2\arrow[r]\arrow[d,"\varrho_1"]&\mathbf{S}_\mathcal{Q_C}\arrow[d,"\varrho_2"]\\
        \mathbf{S}_\mathcal{Q_C}^2/\boldsymbol{\mu}_2\arrow[r]&\mathbf{S}_\mathcal{Q_C}/\boldsymbol{\mu}_2
    \end{tikzcd}
    \]
    where the horizontal maps are projections onto the first coordinate. Note that $\varrho_1$ and $\varrho_2$ define $\boldsymbol{\mu}_2$–torsors over $\mathbf{S}_\mathcal{Q_C}^2/\boldsymbol{\mu}_2$ and $\mathbf{S}_\mathcal{Q_C}/\boldsymbol{\mu}_2$ respectively. By the commutativity of the diagram, we get an isomorphism $\varrho_1^{-1}(x)\to\varrho_2^{-1}([1])$ of $\boldsymbol{\mu}_2$–torsors. Since $\varrho_2^{-1}(X)([1])=\boldsymbol{\mu}_2(X)\cdot1\neq\varnothing$, we have $\varrho_1^{-1}(x)\neq\varnothing$, so $x$ lifts to an element $(a,b)\in\mathbf{S}_\mathcal{Q_C}^2(X)$. By the Cartesian diagram $($\ref{cartdiag}$)$ we have a morphism $\Pi^{-1}(a,b)\overset{f_1}{\to}\Pi_+^{-1}(x)$ and, since $\Pi$ and $\Pi_+$ are $\mathbf{G}$–torsors, this morphism must be an isomorphism. Then we have an isomorphism of fppf–sheaves of algebras
    \[
    \mathbf{W}(\mathcal{C}^{a,b})\simeq\Pi^{-1}(a,b)\wedge^\mathbf{G}\mathbf{W}(\mathcal{C})\simeq\Pi_+^{-1}(x)\wedge^\mathbf{G}\mathbf{W}(\mathcal{C})\simeq\mathbf{W}(C').
    \]
    Thus $[C']\in S_1$.
\end{proof}

\begin{corollary}
    Let $\mathcal{C}$ and $\mathcal{C}'$ be two octonion algebras over X. The quadratic forms $\mathcal{Q}_\mathcal{C}$ and $\mathcal{Q}_{\mathcal{C}'}$ are isometric if and only if there exists $a,b\in\mathbf{S}_\mathcal{Q_C}(X)$ such that $\mathcal{C}'$ is isomorphic to $\mathcal{C}^{a,b}$.
\end{corollary}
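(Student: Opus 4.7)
The statement follows rather directly from Theorem \ref{thm4.17}, with Lemma \ref{lemma4.9} and Theorem \ref{thm4.7} as the bridges between fppf cohomology and the geometric assertions. I would prove the two directions separately.

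The reverse direction is immediate: if $\mathcal{C}' \simeq \mathcal{C}^{a,b}$ as octonion algebras, then Remark \ref{remark3.5} turns the algebra isomorphism into an isometry of the associated quadratic forms, and by Remark \ref{anm4.9} the quadratic form on the isotope $\mathcal{C}^{a,b}$ is identically $\mathcal{Q}_\mathcal{C}$. Composing gives $\mathcal{Q}_{\mathcal{C}'} \sim \mathcal{Q}_\mathcal{C}$.

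For the forward direction, assume $\mathcal{Q}_{\mathcal{C}'} \sim \mathcal{Q}_\mathcal{C}$. I would first observe that every octonion algebra is étale-locally (hence fppf-locally) isomorphic to the Zorn algebra, so $\mathcal{C}$ and $\mathcal{C}'$ are fppf-locally isomorphic, and Lemma \ref{lemma4.9} attaches to $\mathcal{C}'$ a class $[\mathcal{C}'] \in H^1_\text{fppf}(X, \mathbf{Aut}(\mathcal{C}))$. Under the natural map to $H^1_\text{fppf}(X, \mathbf{O}(\mathcal{Q}_\mathcal{C}))$ induced by $\mathbf{Aut}(\mathcal{C}) \hookrightarrow \mathbf{O}(\mathcal{Q}_\mathcal{C})$, this class is sent (via the bijection of Lemma \ref{lemma4.9}) to $[(\mathcal{C}', \mathcal{Q}_{\mathcal{C}'})]$, which is the base point by the isometry hypothesis. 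Hence $[\mathcal{C}'] \in S_2$, and Theorem \ref{thm4.17} yields $[\mathcal{C}'] \in S_1$.

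It then remains to unpack $S_1$. By Lemma \ref{lemma4.9}, $[\mathcal{C}']$ corresponds to an $\mathbf{Aut}(\mathcal{C})$-torsor $E$ with $E \wedge^\mathbf{G} \mathbf{W}(\mathcal{C}) \simeq \mathbf{W}(\mathcal{C}')$, and lying in $S_1$ means that its pushforward to an $\mathbf{RT}(\mathcal{C})$-torsor is trivial. Combined with Theorem \ref{thm4.10}(2), which identifies $\mathbf{RT}(\mathcal{C})/\mathbf{G} \simeq \mathbf{S}^2_{\mathcal{Q}_\mathcal{C}}$, a global trivialisation of that pushforward produces $(a,b) \in \mathbf{S}^2_{\mathcal{Q}_\mathcal{C}}(X)$ with $E \simeq \Pi^{-1}(a,b) = \mathbf{E}^{a,b}$. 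Applying Theorem \ref{thm4.7} identifies $\mathbf{W}(\mathcal{C}') \simeq \mathbf{E}^{a,b} \wedge^\mathbf{G} \mathbf{W}(\mathcal{C}) \simeq \mathbf{W}(\mathcal{C}^{a,b})$, so $\mathcal{C}' \simeq \mathcal{C}^{a,b}$. The main technical work has already been absorbed into Theorem \ref{thm4.17}; the only point deserving care here is the standard fact that any two octonion algebras over $X$ are fppf-locally isomorphic, which is what allows Lemma \ref{lemma4.9} to apply to an arbitrary $\mathcal{C}'$.
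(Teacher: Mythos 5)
Your proof is correct and follows essentially the same route as the paper: identify $S_2$ with the classes of octonion algebras whose norm is isometric to $\mathcal{Q}_\mathcal{C}$ and $S_1$ with those isomorphic to some $\mathcal{C}^{a,b}$ (via Lemma \ref{lemma4.9}, Theorem \ref{thm4.10}(2) and Theorem \ref{thm4.7}), then invoke Theorem \ref{thm4.17}. You merely spell out in more detail the unpacking of $S_1$ and $S_2$ that the paper compresses into two sentences, which is a faithful elaboration rather than a different argument.
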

\begin{proof}
    By theorem \ref{thm4.7} and lemma \ref{lemma4.9}, the set $S_1$ classifies the octonion algebras over $X$ isomorphic to $\mathcal{C}^{a,b}$ for some $a,b\in\mathbf{S}_\mathcal{Q_C}(X)$. Also by lemma \ref{lemma4.9}, the set $S_2$ classifies octonion algebras whose norm is isometric to $\mathcal{Q_C}$. By the equality established in theorem \ref{thm4.17}, the statement follows.
\end{proof}

\bibliographystyle{siam}
\bibliography{ref.bib}

\end{document}